\newtheorem{theorem}{Theorem}[section]
\newtheorem{lemma}[theorem]{Lemma}
\newtheorem{claim}[theorem]{Claim}
\newtheorem{corollary}[theorem]{Corollary}
\newtheorem{remark}[theorem]{Remark}
\newtheorem{question}[theorem]{Question}
\newtheorem{problem}[theorem]{Problem}
\newtheorem{observation}[theorem]{Observation}
   \newcommand{\lab}[1]{\label{#1}}                
   \newcommand{\thlab}[1]{\thlabel{#1}} 
\newcommand{\cM}{{\mathcal M}}
\newcommand{\cT}{{\mathcal T}}
\newcommand{\tentacle}{has Property~$\cT$}
\newcommand{\T}{$\cT$}
\renewcommand{\phi}{\varphi}
\newcommand{\dist}{\operatorname{dist}}
\renewcommand\tilde[1]{\widetilde{#1}}
\newcommand\se{\subseteq}
\newcommand\nse{\nsubseteq}  	
\newcommand\sne{\varsubsetneq} 	
\newcommand\sm{\setminus}
\renewcommand\to{\rightarrow}
\newcommand\nto{\nrightarrow}
\begin{document}

\title{On minimal Ramsey graphs and Ramsey equivalence in multiple colours}

\author{
\quad{Dennis Clemens
\thanks{Technische Universit\"at Hamburg, Institut f\"ur Mathematik, Germany
Email: dennis.clemens@tuhh.de }} 
\quad{Anita Liebenau
\thanks{School of Mathematics and Statistics, UNSW Sydney,
Sydney NSW 2052, Australia. 
Email: a.liebenau@unsw.edu.au. Supported by an ARC Decra Fellowship. Previously at Monash University where this research was partly carried out.}} 
\quad{Damian Reding  
\thanks{Technische Universit\"at Hamburg, Institut f\"ur Mathematik, Germany
Email: damian.reding@tuhh.de}}
}

\maketitle

\begin{abstract}
For an integer $q\ge 2$, a graph $G$ is called $q$-Ramsey for a graph $H$ if every $q$-colouring of the edges of $G$ contains a monochromatic copy of
$H$. If $G$ is $q$-Ramsey for $H$, yet no proper subgraph of $G$ has this property then $G$ is called $q$-Ramsey-minimal for $H$. 
Generalising a statement by Burr, Ne\v{s}et\v ril and R\"odl from 1977 we prove that, for $q\ge 3$,  
if $G$ is a graph that is not $q$-Ramsey for some graph $H$ then $G$ is contained as an induced subgraph in an infinite number of $q$-Ramsey-minimal graphs for $H$, as long as $H$ is $3$-connected or isomorphic to the triangle. For such $H$, the following are some consequences. 
\begin{itemize}

\item
For $2\le r< q$, every $r$-Ramsey-minimal graph for $H$ is contained as an induced subgraph in an infinite number of $q$-Ramsey-minimal graphs for $H$. 

\item 
For every $q\ge 3$, there are $q$-Ramsey-minimal graphs for $H$ of arbitrarily large maximum degree, genus, and chromatic number. 

\item
The collection $\{\cM_q(H) : H \text{ is 3-connected or } K_3\}$ forms
an antichain with respect to the subset relation, 
where $\cM_q(H)$ denotes the set of all graphs that are
$q$-Ramsey-minimal for $H$. 
\end{itemize}

We also address the question which pairs of graphs satisfy $\cM_q(H_1)=\cM_q(H_2)$, in which case $H_1$ and $H_2$ are called $q$-equivalent. We show that two graphs $H_1$ and $H_2$ are $q$-equivalent for even $q$ if they are $2$-equivalent, and that in general $q$-equivalence for some $q\ge 3$ does not necessarily imply $2$-equivalence. Finally we indicate that for connected graphs this implication may hold: Results by Ne\v{s}et\v ril and R\"odl and by Fox, Grinshpun, Liebenau, Person and Szab\'o imply that the complete graph is not $2$-equivalent to any other connected graph. We prove that this is the case for an arbitrary number of colours. 

\end{abstract} 

\section{Introduction}
A graph $G$ is $q$-Ramsey for $H$, denoted by $G\to (H)_q$, if every $q$-colouring of the edges of $G$ contains a monochromatic copy of $H$. Many interesting questions arise when we consider those graphs $G$ which are minimal with respect to $G\to (H)_q$. A graph $G$ is $q$-Ramsey-minimal for $H$ (or $q$-minimal for $H$) if $G\to (H)_q$ and $G'\nto(H)_q$ for every proper subgraph $G'\sne G$. We denote the family of such graphs by $\mathcal{M}_q(H)$. The fact that $\mathcal{M}_q(H)\neq\varnothing$ for every graph $H$ and every integer $q\ge 2$ is a consequence of Ramsey's theorem \cite{r1930}. 
Burr, Erd\H{o}s, and Lov\'asz~\cite{bel1976} initiated the study of properties of graphs in $\cM_2(K_k)$ in 1976, where as usual $K_k$ denotes the complete graph on $k$ vertices.  Their seminal paper raised numerous questions on minimal Ramsey graphs that were addressed by various mathematicians in subsequent years~\cite{befs1981,bnr1984,l1994,bdks2001,rs2008}. 

Various graph parameters have been studied for graphs in $\mathcal{M}_q(H)$, the most prominent being the Ramsey number $r_q(H)$ which is the smallest number of vertices of a graph in $\cM_q(H)$. 
When $H$ is the complete graph we also write $R_q(k)$ for $r_q(K_{k})$. 
Estimating $R_q(k)$ or even $R_2(k)$ is one of the fundamental open problems in Ramsey theory. It is known that $2^{k/2+o(k)}\leq R_2(k)\leq 2^{2k-o(k)}$ where the best lower bound is due to Spencer~\cite{s1977} improving a result by Erd\H{o}s~\cite{e1947}, and the best known upper bound is due to Conlon~\cite{c2009}, improving earlier bounds by Erd\H{o}s and Szekeres~\cite{es1935}, R\"odl~\cite{gr1987}, and Thomason~\cite{t1988}.  
Quite surprisingly, some other parameters could be determined precisely. Ne\v set\v ril and R\"odl~\cite{nr1976} showed, for example, that the smallest clique number of a graph in $\cM_q(H)$ is exactly the clique number of $H$, extending earlier work by Folkman~\cite{f1970}. Furthermore, the smallest chromatic number and the smallest connectivity of a graph in $\cM_q(H)$ are known for all $H$ and $q\ge 2$, see~\cite{bel1976} and~\cite{bnr1984}. 
A parameter of ongoing interest is $s_q(H)$, the smallest minimum degree of a graph $G\in \cM_q(H)$. The value of $s_2(H)$ is known for some graphs $H$, including cliques~\cite{bel1976}, complete bipartite graphs~\cite{fl2006}, trees and cycles~\cite{szz2010}, and complete graphs with a pendant edge~\cite{fglps2014}. The asymptotic behaviour of $s_q(K_k)$ was considered when $q\to\infty$ in~\cite{fglps2016,gw2017}, and when $k\to \infty$ in~\cite{hrs2018}.

In this paper we are interested in the interplay between $\cM_q(H)$ and $\cM_r(H')$ when $q\neq r$ or when $H$ and $H'$ are nonisomorphic. 
Clearly, every graph $G$ that is a $q$-minimal graph for some graph $H$ is $r$-Ramsey for $H$, for all $2\le r\le q$, and thus contains an $r$-minimal graph as an induced subgraph. Our first contribution complements this observation in the sense that every $r$-minimal graph $G$ can be obtained this way from a $q$-minimal graph $G'$, as long as $H$ satisfies some connectivity conditions. Following standard notation we write $H\cong H'$ if $H$ and $H'$ are isomorphic. 
\begin{theorem}\thlab{rodlscorollary} 
Let $H$ be a $3$-connected graph or $H\cong K_3$ and let $q>r \geq 2$ be integers. 
Then for every $F\in \cM_r(H)$ 
there are infinitely many graphs $G\in \cM_q(H)$ such that $F$ is an induced subgraph of $G$. 
\end{theorem}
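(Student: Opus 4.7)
The plan is to deduce the theorem directly from the main technical result of the paper, announced in the abstract: if a graph $G$ satisfies $G \nto (H)_q$ for some $q \geq 3$ and $H$ is $3$-connected or isomorphic to $K_3$, then $G$ is contained as an induced subgraph in infinitely many members of $\cM_q(H)$. Given this, it suffices to verify that every $F \in \cM_r(H)$ satisfies $F \nto (H)_q$ whenever $q > r \geq 2$; the conclusion then follows by applying the main theorem to $G := F$.

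To check the remaining claim, fix any edge $e \in E(F)$. Since $F$ is $r$-Ramsey-minimal for $H$, we have $F - e \nto (H)_r$, so there exists an $r$-edge-colouring $c_0$ of $F - e$ with no monochromatic copy of $H$. Extend $c_0$ to a $q$-edge-colouring $c$ of $F$ by assigning $e$ a fresh colour outside the palette of $c_0$, which is possible because $q \geq r + 1$. A hypothetical monochromatic copy of $H$ under $c$ either avoids $e$ entirely, in which case it is already monochromatic under $c_0$ -- contradiction -- or lies inside the new colour class, which is impossible since that class consists of the single edge $e$ whereas $H$ has at least three edges (being $3$-connected or $K_3$). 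Hence $F \nto (H)_q$, as required.

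The genuine difficulty therefore rests in the main result being invoked rather than in this short reduction. I expect that main result to be proved by a gluing construction inspired by the $q = 2$ argument of Burr, Ne\v{s}et\v ril and R\"odl: one attaches carefully designed gadgets to the given non-$q$-Ramsey graph $G$ so as to force $q$-Ramseyness, and then passes to a Ramsey-minimal subgraph while arguing that $G$ survives as an induced subgraph and that the construction can be varied to yield infinitely many pairwise nonisomorphic outcomes. The hypothesis that $H$ is $3$-connected or $K_3$ should be exactly what prevents monochromatic copies of $H$ from splitting between $G$ and the attached gadgets in uncontrolled ways; handling this rigidity is what I expect to be the delicate technical step of the underlying proof.
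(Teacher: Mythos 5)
Your reduction is exactly the paper's intended route: Theorem~1.1 is derived from Theorem~1.2 by checking that every $F\in\cM_r(H)$ satisfies $F\nto(H)_q$ for $q>r$, and your verification of that fact (recolour $F-e$ with $r$ colours $H$-freely and give $e$ a fresh colour, noting $H$ has more than one edge) is the standard and correct argument. Since Theorem~1.2 is a separately stated and proved result in the paper, invoking it is legitimate, and your proof is correct and essentially identical in approach.
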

In fact, this result is an immediate consequence of the following more general statement.
\begin{theorem}\thlab{thm:containment}
Let $H$ be a $3$-connected graph or $H\cong K_3$, let $q\geq 2$ be an integer and 
let $F$ be a graph which is not $q$-Ramsey for $H$. 
Then there are infinitely many graphs $G\in \cM_q(H)$ such that $F$ is an induced subgraph of $G$. 
\end{theorem}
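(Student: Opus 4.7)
The plan is to generalise the classical Burr--Ne\v{s}et\v{r}il--R\"odl construction for $q=2$ to arbitrary $q$. The key device is the \emph{signal sender}: a graph $S$ with two distinguished edges $e, f$ at large graph distance such that $S \nto (H)_q$ but every good $q$-colouring of $S$ (one without a monochromatic copy of $H$) forces $e$ and $f$ to have the same colour (a \emph{positive} sender) or different colours (a \emph{negative} sender). The existence of positive and negative signal senders of arbitrarily large distance, for every $q\ge 3$, whenever $H$ is $3$-connected or $H\cong K_3$ is where the hypothesis on $H$ is used, and is the main technical obstacle. For $q=2$ these gadgets are classical; for $q\ge 3$ (and in particular for negative senders) a considerably more delicate inductive construction is required, exploiting the rigidity afforded by $3$-connectivity or by the triangle structure.

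Granted the senders, fix a good $q$-colouring $c:E(F)\to [q]$, which exists by hypothesis. For each pair $e_i, e_j\in E(F)$, glue a positive sender between $e_i$ and $e_j$ if $c(e_i)=c(e_j)$ and a negative sender otherwise, attached only at the signal edges, with fresh interior vertices placed at large pairwise distance. The resulting graph $\tilde F$ contains $F$ as an induced subgraph, and every good $q$-colouring of $\tilde F$ restricts on $E(F)$ to a colouring equivalent to $c$ up to permutation of colour names. To achieve the Ramsey property and ensure that every edge of $F$ is essential, next attach to $\tilde F$ suitable copies of a $q$-Ramsey graph for $H$ through further senders anchored on edges of $F$; by designing the attachments so that every $e\in E(F)$ participates in forcing the global Ramsey property, one obtains a graph $G^\ast$ with $G^\ast \to (H)_q$ and $G^\ast - e \nto (H)_q$ for every $e\in E(F)$. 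Indeed, any good $q$-colouring of $G^\ast$ would, via the sender constraints, descend to a good $q$-colouring of some attached Ramsey host, contradicting the fact that the host is $q$-Ramsey for $H$; on the other hand, removing any edge of $F$ releases one constraint, and an explicit good colouring can then be assembled from $c$ on $F$ (modified on the removed edge) together with good colourings of each sender.

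Finally, take any subgraph $G\subseteq G^\ast$ minimal with respect to $G\to (H)_q$, so that $G\in \cM_q(H)$. The essentiality statement above guarantees that no edge of $F$ is deleted when passing to $G$, hence $F\subseteq G$; and non-edges of $F$ survive because each sender is attached only at its signal edges with fresh interior vertices. Thus $F$ sits in $G$ as an induced subgraph. To obtain infinitely many such $G$, repeat the construction using signal senders of arbitrarily large order (for instance, by concatenating senders in series), producing graphs of strictly increasing order, each in $\cM_q(H)$ and each containing $F$ as an induced subgraph. The crux throughout is the construction of the $q$-colour signal senders for $H$; once those are in hand, the remaining gluing and minimality argument is a careful but largely routine adaptation of the two-colour case.
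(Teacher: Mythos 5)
Your proposal correctly identifies signal senders as the foundational gadget and the general strategy of forcing the colouring of $F$ to agree with a fixed $H$-free colouring $c$, but it glosses over the central difficulty, which is precisely where the paper's main technical work lies. The step ``attach to $\tilde F$ suitable copies of a $q$-Ramsey graph for $H$ through further senders \dots\ so that every $e\in E(F)$ participates in forcing the global Ramsey property'' is not a construction. If you attach a graph $R$ with $R\to(H)_q$ anywhere, then $G^\ast - e$ still contains $R$ and is still $q$-Ramsey, so no edge of $F$ is essential; and signal senders only relate \emph{single edges} to single edges, so they cannot by themselves express the conditional ``if \emph{all} edges of a colour class $F_k$ are monochromatic of colour $i$, then some external edge $e_k$ must get colour $i$ --- but if even one edge of $F_k$ is deleted, $e_k$ is free.'' That many-edges-to-one-edge transfer with the deletion property is exactly what the paper's \emph{indicator} gadget (an $(H,F,e,q,d)$-indicator with Properties (I1)--(I3) and Property~$\cT$) provides, and its existence is proved by a delicate induction on $e(F)$, with Property~$\cT$ needed to control stray copies of $H$ when gluing. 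The paper then derives $G\to(H)_q$ not by descending to a Ramsey host but purely combinatorially: negative senders force $q$ auxiliary edges $r_1,\dots,r_q$ to get distinct colours, positive senders force each $F_k$ monochromatic, indicators force $e_k$ to inherit that colour, and a further chain of negative and positive senders on edges $f_1,\dots,f_q$ produces a colour that must avoid all of $[q]$ --- a contradiction. Property (I3) of the indicator is what makes $G-f\nto(H)_q$ for each $f\in E(F)$.

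Your argument for ``infinitely many'' also has a gap: enlarging the senders makes $G^\ast$ larger, but the minimal Ramsey subgraph $G\se G^\ast$ need not contain most of any sender, so you have no lower bound on $v(G)$ and the minimal graphs you extract could coincide. The paper instead iterates the whole construction on $F_i$ equal to the disjoint union of $v(G_{i-1})$ copies of $F$ (which is still not $q$-Ramsey for $H$); since every edge of $F_i$ must survive into the minimal graph $G_i$, one gets $v(G_i)\ge v(F_i)>v(G_{i-1})$, guaranteeing infinitely many distinct examples. In short, the skeleton of your approach is right, but the indicator gadgets --- the actual new content of the proof --- are missing, and the two claims you flag as ``routine'' (essentiality of each edge of $F$, and infinitude) are exactly the ones that need them.
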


For the assertions of \thref{rodlscorollary,thm:containment} to hold it is clearly necessary that $H$ is {\em Ramsey infinite}, that is $\cM_q(H)$ is infinite. Some graphs including, for example, star forests with an odd number of edges, are known not to be Ramsey infinite. Faudree~\cite{f1991} provided a full characterization of forests that are Ramsey infinite. Furthermore, it follows from~\cite[Corollary 4]{rr1995} by R\"odl and Ruci\'nski that $H$ is Ramsey infinite if $H$ contains a cycle. It may well be possible that the assertions of \thref{rodlscorollary,thm:containment} hold for all graphs $H$ that are Ramsey infinite. 

The 2-colour version of \thref{thm:containment} was proved by Burr, Ne\v{s}et\v{r}il and R\"odl~\cite{bnr1984}, extending earlier work by Burr, Faudree and Schelp~\cite{BFS} who proved the statement for $q=2$ and when $H$ is a complete graph. Yet, it is this multi-colour version which implies \thref{rodlscorollary} as a corollary. 
As in \cite{bnr1984} for $q=2$, \thref{thm:containment} also implies the existence of multicolour Ramsey-minimal graphs with arbitrarily large maximum degree, genus and chromatic number. Indeed, it is well-known that, for a fixed graph $H$ containing a cycle and for a fixed integer $k$, the uniform random graph $G(n,p)$ does not contain $H$ as a subgraph and has maximum degree, genus and chromatic number at least $k$ with probability tending to 1 as $n\to\infty$, for some $p=\Theta(1/n)$. Take $F$ in \thref{thm:containment} to be such a graph drawn from $G(n,p)$.

Another implication of \thref{thm:containment} that we find noteworthy is the following.
\begin{corollary}\thlab{wow}
Let $H$ be a $3$-connected graph or $H\cong K_3$ and let $q\geq 2$ be an integer. 
Suppose that $\mathcal{M}_q(H)\se\mathcal{M}_q(H')$ for some arbitrary graph $H'$. 
Then $\mathcal{M}_q(H)=\mathcal{M}_q(H')$. 
\end{corollary}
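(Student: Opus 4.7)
The plan is to deduce the reverse inclusion $\cM_q(H')\se\cM_q(H)$ from the hypothesis, using \thref{thm:containment} applied to $H$.

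First I would observe that $\cM_q(H)\se\cM_q(H')$ yields the implication
\[
G\to (H)_q \quad\Longrightarrow\quad G\to (H')_q
\]
for every graph $G$: if $G\to(H)_q$, pass to a subgraph $G^{*}\se G$ with $G^{*}\in\cM_q(H)$; by hypothesis $G^{*}\in\cM_q(H')$, hence $G\to(H')_q$. Taking contrapositives, $G\nto(H')_q$ implies $G\nto(H)_q$.

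Now fix an arbitrary $F\in\cM_q(H')$; I need to verify (i) $F\to(H)_q$ and (ii) $F'\nto(H)_q$ for every $F'\sne F$. Claim~(ii) is immediate from the contrapositive just noted, since $F'\nto(H')_q$ by $q$-Ramsey-minimality of $F$ for $H'$. For (i) I would argue by contradiction: if $F\nto(H)_q$, then \thref{thm:containment} supplies infinitely many graphs $G\in\cM_q(H)$ containing $F$ as an induced subgraph, so in particular some such $G$ with $|V(G)|>|V(F)|$, forcing $F\sne G$. The hypothesis then gives $G\in\cM_q(H')$, but $F\sne G$ combined with $F\to(H')_q$ contradicts the $q$-Ramsey-minimality of $G$ for $H'$. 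Hence $F\to(H)_q$, and so $F\in\cM_q(H)$, as desired.

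The only substantive ingredient is the appeal to \thref{thm:containment}, which is precisely where the connectivity hypothesis on $H$ is used; note that no restriction is placed on $H'$. Everything else amounts to chasing the containment $\cM_q(H)\se\cM_q(H')$ through the minimality of $F$ and of $G$, so I do not anticipate a genuine technical obstacle beyond correctly packaging the contrapositive and minimality arguments.
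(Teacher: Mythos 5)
Your proof is correct and follows essentially the same route as the paper's: both verify the reverse inclusion by passing to a $q$-minimal subgraph when $F\to(H)_q$ would be relevant, and both invoke \thref{thm:containment} to embed $F$ into a strictly larger member of $\cM_q(H)\se\cM_q(H')$ when $F\nto(H)_q$, contradicting the minimality of a graph in $\cM_q(H')$. The only difference is presentational (you verify the two minimality conditions directly, the paper argues by contradiction on a counterexample $G\in\cM_q(H')\sm\cM_q(H)$), so there is nothing to flag.
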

We provide the short argument in Section~\ref{sec:containment}. 
Another way to view \thref{wow} is that if both $H$ and $H'$ are 3-connected or isomorphic to $K_3$ then the two sets $\mathcal{M}_q(H)$ and $\mathcal{M}_q(H')$ are either equal or incomparable with respect to the subset relation, i.e.~the set $\{\cM_q(H) : H \text{ is $3$-connected or } K_3\}$ forms an antichain with respect to the subset relation.
We find it instructive to note at this point that for such $H, H'$, in fact, $\mathcal{M}_q(H)=\mathcal{M}_q(H')$ is only possible if $H$ is isomorphic to $H'.$
\begin{theorem}\thlab{dennisObs}
Let $H$ and $H'$ be non-isomorphic graphs that are either 3-connected or isomorphic to $K_3$. Then $\mathcal{M}_q(H)\neq\mathcal{M}_q(H')$ for all $q\ge 2$. 
\end{theorem}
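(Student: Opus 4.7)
I would argue by contradiction. Suppose $\cM_q(H)=\cM_q(H')$ for some non-isomorphic $H$ and $H'$, each either $3$-connected or isomorphic to $K_3$. The plan is to force $H\cong H'$ by showing that $H$ embeds into $H'$ as a subgraph and conversely; since this yields equal orders and sizes, it then forces an isomorphism, the desired contradiction.

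First, the assumption $\cM_q(H)=\cM_q(H')$ is equivalent to requiring that $G\to(H)_q$ if and only if $G\to(H')_q$ holds for every graph $G$. In particular every graph that is $q$-Ramsey for one is $q$-Ramsey for the other, and so contains both $H$ and $H'$ as subgraphs (consider the constant colouring). For the main step, suppose toward contradiction (within the contradiction) that $H'\nse H$. Since $H\nto(H)_q$ for $q\ge 2$ (as $H$ has at least two edges, any such splitting of $E(H)$ into two non-empty colour classes avoids a monochromatic $H$), \thref{thm:containment} supplies a graph $G\in\cM_q(H)=\cM_q(H')$ that contains $H$ as an induced subgraph. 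Now consider the $q$-colouring of $G$ that assigns colour $1$ to every edge of the induced copy of $H$ and colour $2$ to every other edge. A monochromatic copy of $H'$ must appear in this colouring because $G\to(H')_q$; being colour $1$ would force $H'\se H$, contradicting the assumption, whereas being colour $2$ forces $H'\se G-E(H)$. The strategy from here is to arrange that $G$ can be chosen so that $G-E(H)$ is $H'$-free: this yields the required contradiction, showing $H'\se H$, and by the symmetric argument $H\se H'$ as well, whence $H\cong H'$.

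The main obstacle is precisely this last structural guarantee. The statement of \thref{thm:containment} says nothing about where copies of $H'$ may appear in $G$, so I would revisit its proof. The construction builds $G$ by attaching to $F=H$ suitable gadgets (signal senders for $H$, whose existence relies on $H$ being $3$-connected or isomorphic to $K_3$) in order to enforce minimality. Because these gadgets are available in considerable variety, the right refinement should produce gadgets that are themselves $H'$-free, and whose attachment to $F=H$ does not produce any new copy of $H'$ outside the embedded $H$. Verifying that no spurious $H'$ is created through the gluing, or through interaction between gadgets, is where the technical work lies; once this control is in hand, the contradiction outlined above closes the argument.
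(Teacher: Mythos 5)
There is a genuine gap, and you have located it yourself but not closed it: your argument hinges on choosing $G$ so that $G-E(\tilde H)$ contains \emph{no copy of $H'$ at all}, and your plan for achieving this --- refining the gadgets in the proof of \thref{thm:containment} so that they are $H'$-free as graphs --- is not supported by anything in the toolbox. The existence statement for signal senders (\thref{lem:signal}) gives no control whatsoever over which graphs other than $H$ occur as subgraphs of $S(q,H,d)$; property (S1) only says $S\nto(H)_q$, and a graph that is not $q$-Ramsey for $H$ can certainly contain large cliques and hence copies of $H'$ (e.g.\ $K_5\nto(K_4)_2$ but $K_5\supseteq K_5$). Subgraph-freeness is a far stronger requirement than anything the gadget machinery provides, and even granting it you would still have to rule out copies of $H'$ straddling several gadgets together with vertices of $\tilde H$. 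So the ``technical work'' you defer is not a routine verification; it is the entire content of the theorem, and the route you chose makes it harder than necessary.

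The paper's proof avoids this by never asking for subgraph-freeness. Assuming WLOG $H\nse H'$ (legitimate, since mutual subgraph containment of non-isomorphic finite graphs is impossible), it takes a \emph{positive signal sender $S^+(q,H',d)$ for $H'$} with $d=v(H)+1$, attaches a copy of it to each edge of a single copy $\tilde H$ of $H'$ sharing a common signal edge $e$, and obtains a graph $G$ with $G\to(H')_q$ (by (S2), every $H'$-free colouring forces $\tilde H$ monochromatic). For the other direction it only needs an \emph{$H$-free colouring} of the signal sender --- if none exists then $S$ itself already distinguishes, since $S\nto(H')_q$ by (S1) --- and then the 3-connectivity of $H$ (or $H\cong K_3$) together with the distance condition (S3) localizes every copy of $H$ inside a single signal sender, where the colouring is $H$-free. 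Replacing your requirement ``the gadget contains no $H'$'' by ``the gadget admits an $H$-free colouring,'' which (S1) hands you for free, is exactly the idea your proposal is missing. If you want to salvage your own outline, you would have to prove an $H'$-free-gadget existence lemma from scratch, which is not available.
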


It is now natural to ask which pairs of graphs $H$ and $H'$ do satisfy $\cM_q(H)=\cM_q(H')$. For an integer $q\ge 2$ let us call two graphs $H$ and $H'$ {\em $q$-Ramsey equivalent} (or just {\em $q$-equivalent}) if $\cM_q(H)=\cM_q(H')$. 
The notion was introduced by Szab\'o, Zumstein and Z\"urcher~\cite{szz2010} in the case of two colours to capture the fact that $s_2(H)=s_2(H')$ for some graphs $H$ and $H'$ merely because $\cM_2(H)=\cM_2(H')$. 
We are particularly interested in the relationship between 2-colour equivalence and multi-colour equivalence, i.e.~what can we infer from known results for 2 colours to more colours? 

To briefly survey which pairs of graphs are known to be 2-equivalent, 
let $H+sH'$ denote the graph formed by the vertex disjoint union of a copy of $H$ and $s$ copies of $H'$, where we omit $s$ when $s=1$.
It is straight-forward to see that $K_k$ is 2-equivalent to $K_k+sK_1$ if and only if $s\le R(k)-k$, see e.g.~\cite{szz2010}. For $k\ge 4$, $K_k$ and $K_k+K_2$ are known to be $2$-equivalent. In fact, Szab\'o, Zumstein and Z\"urcher~\cite{szz2010} proved that for $2\le t\le k-2$ and $s< (R(k-t+1,k)-2(k-t))/2t$ the graphs $K_k$ and $K_k+sK_t$ are $2$-equivalent, where $R(k,\ell)$ denotes the smallest integer $n$ such that every red/blue-colouring of the edges of $K_n$ contains a red copy of $K_k$ or a blue copy of $K_{\ell}$. 
For the case $t=k-1$, Bloom and the second author~\cite{bl2018} show that $K_k$ and $K_k+K_{k-1}$ are 2-equivalent for all $k\ge 4$.  
(The requirement $k\ge 4$ is necessary in both~\cite{szz2010} and~\cite{bl2018}. Furthermore, the result in~\cite{szz2010} is optimal up to a factor of roughly 2, the result in~\cite{bl2018} is optimal in the sense that $K_k+K_{k-1}$ cannot be replaced by $K_k+2K_{k-1}$. We comment on these {\em non-equivalence} results further below.)
Axenovich, Rollin, and Ueckerdt~\cite{aru2015} provide a tool to lift these 2-equivalence results to $q$-equivalence.

\begin{theorem}[Theorem 10 in \cite{aru2015}] \thlab{aru}
If two graphs $H$ and $H'$ are $2$-equivalent and $H\se H'$ then $H$ and $H'$ are $q$-equivalent for every $q\ge 3$. 
\end{theorem}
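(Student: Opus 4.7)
I plan to establish the equivalent statement that $G\to(H)_q\iff G\to(H')_q$ for all graphs $G$ and all $q\ge 2$. The direction $G\to(H')_q\Rightarrow G\to(H)_q$ is immediate from $H\se H'$, since every monochromatic $H'$ contains a monochromatic $H$. For the converse I would proceed by induction on $q$, the base case $q=2$ being exactly the $2$-equivalence hypothesis.

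For the inductive step, assume the equivalence holds for all $r\in\{2,\ldots,q-1\}$ and suppose for contradiction that $G\to(H)_q$ but some $q$-colouring $c$ of $E(G)$ has no monochromatic $H'$, so each colour class $G_i$ is $H'$-free. The argument would use two tools: (a) the subgraph-removal lemma---if $F\se G$ is $H$-free and $G\to(H)_r$ then $G\sm E(F)\to(H)_{r-1}$, obtained by extending any $H$-avoiding $(r-1)$-colouring of $G\sm E(F)$ by $F$ as a new colour; and (b) the $2$-equivalence applied to each $G_i$: since $G_i$ is $H'$-free, $G_i\nto(H')_2$, hence by hypothesis $G_i\nto(H)_2$, so $G_i$ decomposes as $A_i\cup B_i$ with both parts $H$-free. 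A clean first reduction rules out the case that some $G_i$ is already $H$-free, since (a) with $F=G_i$ combined with the induction hypothesis gives $G\sm E(G_i)\to(H')_{q-1}$, contradicting that the restriction of $c$ is a $(q-1)$-colouring with no monochromatic $H'$. So every $G_i$ contains a copy of $H$, and applying (a) with $F=A_i$ together with the induction hypothesis yields $G\sm E(A_i)\to(H')_{q-1}$; merging the now-slimmed colour $i$ (consisting only of $B_i$) into any colour $j\ne i$ produces a $(q-1)$-colouring that must contain a monochromatic $H'$, and since every un-merged colour is an $H'$-free $G_k$, this copy must lie in the merged class, forcing $G_j\cup B_i\supseteq H'$ (and symmetrically $G_j\cup A_i\supseteq H'$) for every $i\ne j$ and every valid split.

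The main obstacle is to convert this abundance of straddling copies of $H'$ together with the many $H$-free pieces $A_i, B_i$ into an outright $q$-decomposition of $E(G)$ with all classes $H$-free, which would contradict $G\to(H)_q$. My plan is to iterate the subgraph-removal/$2$-equivalence cycle on several colour classes at once, peeling off $H$-free pieces $A_{i_1},\ldots,A_{i_k}$ from different colour classes, applying (a) repeatedly to arrive at the inductive hypothesis at level $q-k$, and then reintroducing the $A_{i_\ell}$ as individual $H$-free colour classes of the sought $q$-decomposition. The hard part is coherence: the intermediate merged classes used at each reduction step must remain $H'$-free in order to invoke the induction hypothesis, and I anticipate choosing the splits $G_i=A_i\cup B_i$ adaptively, guided by the $2$-equivalence applied to larger unions $G_i\cup G_j$ (for which $c$ induces a $2$-colouring with no monochromatic $H'$, hence by the base case a $2$-colouring with no monochromatic $H$), in order to control how the pieces can combine.
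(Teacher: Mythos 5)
Your proposal is not a complete proof: you set up a sound framework (the easy direction via $H\se H'$, induction on $q$, the removal lemma (a), and the decomposition of non-$2$-Ramsey graphs into two $H$-free parts), and your treatment of the case where some colour class $G_i$ is $H$-free is correct, but the inductive step is left open. You explicitly defer the ``coherence'' problem of assembling the pieces $A_i,B_i$ into an $H$-free $q$-decomposition of $E(G)$, and that endgame is both unresolved and harder than necessary. (For context, the paper does not prove this statement itself; it quotes it from Axenovich, Rollin and Ueckerdt.)

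The gap closes once you apply the $2$-equivalence to the \emph{union of two colour classes} rather than to single classes. Given a $q$-colouring $c$ with classes $G_1,\dots,G_q$ and no monochromatic $H'$, consider $G_1\cup G_2$. If $G_1\cup G_2\to(H)_2$, then by $2$-equivalence $G_1\cup G_2\to(H')_2$, and the $2$-colouring $(G_1,G_2)$ induced by $c$ already yields a monochromatic $H'$, so we are done. Otherwise $G_1\cup G_2=A\cup B$ with both parts $H$-free; by your lemma (a), $G-E(A)\to(H)_{q-1}$, so by induction $G-E(A)\to(H')_{q-1}$. But $c$ induces the $(q-1)$-colouring $(B,G_3,\dots,G_q)$ of $G-E(A)$, and every class is $H'$-free: the classes $G_j$ with $j\ge 3$ by assumption, and $B$ because it is $H$-free and $H\se H'$ (this is the one place the subgraph hypothesis is needed). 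This is a contradiction. Your version of this step decomposes a single class $G_i=A_i\cup B_i$ and merges $B_i$ into a full class $G_j$; the merged class $G_j\cup B_i$ is not known to be $H'$-free, which is precisely why you only obtain ``straddling copies'' of $H'$ rather than a contradiction, and why no further iteration scheme along those lines is needed once the decomposition is taken over $G_1\cup G_2$.
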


In particular, the pairs $K_k$ and $K_k+s K_t$ are $q$-equivalent for every $q\ge 3$ whenever they are 2-equivalent. It would be desirable to remove the condition $H\se H'$ from \thref{aru}. 
In general, the following lifts $2$-equivalence (without the subgraph requirement) to $q$-equivalence for even $q$.

\begin{observation}\thlab{positiveLC}
Let $a,b,q,r$ be non-negative integers such that $q,r\ge 2$. If $H$ and $H'$ are $q$- and $r$-equivalent then they are $(aq+br)$-equivalent. 
\end{observation}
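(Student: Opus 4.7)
The plan is to induct on $a+b$, reducing the observation to the following key claim: \emph{if $H$ and $H'$ are both $s$-equivalent and $t$-equivalent for integers $s,t \ge 2$, then they are $(s+t)$-equivalent}. The base cases $a+b = 1$ are exactly the hypothesis. In the inductive step, writing $aq+br = ((a-1)q + br) + q$ when $a\ge 1$, or $aq+br = (aq + (b-1)r) + r$ when $b\ge 1$, both summands on the right are at least $2$ as soon as $a+b \ge 2$, so the claim applies with $s, t \in \{q, r, (a-1)q + br, aq+(b-1)r\}$ and the inductive hypothesis supplies the required lower-order equivalences.

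To prove the claim, I would first invoke the standard reformulation: $H$ and $H'$ are $p$-equivalent if and only if $G \to (H)_p \Leftrightarrow G \to (H')_p$ for every graph $G$. The forward direction is immediate since every $p$-Ramsey graph contains a $p$-minimal subgraph; the reverse direction follows by comparing the membership conditions defining $\cM_p(H)$ and $\cM_p(H')$. So it suffices to establish $G \to (H)_{s+t} \Leftrightarrow G \to (H')_{s+t}$ for every $G$.

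The main tool is a colour-splitting lemma: for every $G$,
\[
G \to (H)_{s+t}  \iff  \text{for every partition } E(G) = E_1 \cup E_2, \ (V(G),E_1)\to(H)_s \text{ or } (V(G),E_2)\to(H)_t.
\]
The ($\Leftarrow$) direction is: given any $(s+t)$-colouring of $G$, split the palette $[s+t] = [s] \cup [s+1,s+t]$ to induce a partition of the edges, and apply the hypothesis to one of the two parts to find a monochromatic $H$. The ($\Rightarrow$) direction is the contrapositive: a bad $s$-colouring of $(V(G),E_1)$ and a bad $t$-colouring of $(V(G),E_2)$ can be combined, on disjoint colour palettes, into a bad $(s+t)$-colouring of $G$.

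Given the lemma, the right-hand condition characterising $G\to(H)_{s+t}$ refers only to the $s$- and $t$-arrow relations applied to subgraphs of $G$, and these coincide for $H$ and $H'$ by hypothesis. Hence $G\to(H)_{s+t} \Leftrightarrow G\to(H')_{s+t}$, proving the claim and completing the induction. The argument is essentially routine once the colour-splitting lemma is isolated; I do not foresee a genuine obstacle, only the minor bookkeeping of checking that the summands used in the induction remain at least $2$.
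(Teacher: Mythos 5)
Your proposal is correct and follows essentially the same route as the paper: induction on $a+b$ with base case $a+b=1$, reducing to the claim that $s$- and $t$-equivalence imply $(s+t)$-equivalence, proved by splitting the palette $[s+t]$ into $[s]$ and the remaining $t$ colours and invoking the equivalences on the two resulting edge-subgraphs. The paper presents the colour-splitting step inline rather than as a separate iff-lemma, but the argument is the same.
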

Indeed, the result follows by induction on $a+b\geq 1$ with the case $a+b=1$ given by assumption. Without loss of generality suppose that $H$ and $H'$ have been shown to be $n$-equivalent, where $n=(a-1)q+br$. Now suppose $G$ is a graph such that $G\to(H)_{n+q}$. We claim that then $G\to(H')_{n+q}$ as well. Fix an $(n+q)$-colouring $c:E(G)\to[n+q]$ of the edges of $G$, where $[m]$ denotes the set $\{1,\ldots,m\}$, and consider the (uncoloured) subgraphs $G_1$ given by the $q$ colour classes $1,\ldots, q$ and $G_2$ given by the $n$ colour classes $q+1,\ldots, q+n$. Note that we must have $G_1\to(H)_q$ or $G_2\to(H)_n$ since we could otherwise recolour $G$ with $n+q$ colours without a monochromatic copy of $H$, a contradiction. 
By equivalence in $n$ and $q$ colours we then have that $G_1\to(H')_q$ or $G_2\to(H')_n$ and hence the original colouring of $G$ admits a monochromatic copy of $H'$, so $G\to(H')_{n+q}$ as claimed. Similarly, every graph $G$ that is $(n+q)$-Ramsey for $H'$ needs to be $(n+q)$-Ramsey for $H$, which implies $\cM_{n+q}(H)=\cM_{n+q}(H').$

\thref{positiveLC} implies in particular that if two graphs $H$ and $H'$ are 2- {\em and} 3-equivalent, then they are $q$-equivalent for every $q\ge 2$. 
We wonder whether it is true that two graphs $H$ and $H'$ are 3-equivalent if they are 2-equivalent and whether this can be shown using ad-hoc methods. 

So far, we have investigated what we can deduce for $q\ge 3$ colours when we know that $H$ and $H'$ {\em are} 2-equivalent. What can we deduce when $H$ and $H'$ are {\em not} 2-equivalent? 
To examine this question let us return to the example of disjoint cliques from above. 
It is easy to see that $K_6$ is 2-Ramsey for $K_3$, yet fails to be Ramsey for the triangle and a disjoint edge, see e.g.~\cite{szz2010}. This shows that $K_3$ and $K_3+K_2$ are not $2$-equivalent. 
The following then implies that, in general, nothing can be deduced from non-2-equivalence. 
\begin{theorem}\thlab{thm:3equiv}
The graphs $K_3+K_2$ and $K_3$ are $q$-equivalent for all $q\ge 3$. 
\end{theorem}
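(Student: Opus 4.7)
The goal is to establish that $\cM_q(K_3+K_2)=\cM_q(K_3)$ for every $q\ge 3$. I would reduce this to the stronger statement that for every graph $G$ and every $q\ge 3$,
\[
G\to(K_3)_q \quad\Longleftrightarrow\quad G\to(K_3+K_2)_q,
\]
which immediately forces the minimality families to coincide, since the minimality of $G$ with respect to a fixed Ramsey property is determined entirely by that property together with the subgraph-minimality condition. The direction ``$\Leftarrow$'' is trivial because $K_3\subseteq K_3+K_2$. For the non-trivial direction ``$\Rightarrow$'' I would argue by contrapositive: starting from any $q$-colouring $c\colon E(G)\to[q]$ of $G$ with no monochromatic $K_3+K_2$, the plan is to produce another $q$-colouring $c'$ of $G$ with no monochromatic $K_3$ at all, contradicting $G\to(K_3)_q$.

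The key structural input is the following consequence of $K_3+K_2$-freeness of $c$: if a colour $i$ contains a monochromatic triangle $T_i=\{a_i,b_i,c_i\}$, then every colour-$i$ edge must share a vertex with $T_i$, for otherwise, together with $T_i$, it would form a monochromatic $K_3+K_2$ in colour $i$. A short vertex count then shows that every monochromatic $K_3$ in colour $i$ shares at least two of its vertices with $T_i$, so the colour-$i$ subgraph is contained in the ``triangle book'' rooted at $T_i$.

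Armed with this, I would proceed by induction on the total number of monochromatic triangles of the current colouring. If this number is zero, we are done; otherwise fix a bad triangle $T_i=\{a,b,c\}$ in some colour $i$. The inductive step would be to decrease the count by recolouring a single edge of $T_i$, say $ab$, to some colour $j\ne i$ chosen so that $a$ and $b$ have no common neighbour in colour $j$. Such a swap destroys every monochromatic $K_3$ through $ab$ in colour $i$ without creating a new one through $ab$ in colour $j$, and a short case analysis based on the book structure shows that it preserves $K_3+K_2$-freeness in every colour.

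The main obstacle is to guarantee that such a safe colour $j$ exists, and this is the step that genuinely uses $q\ge 3$. If no safe colour existed for any of the three edges of $T_i$, then $a$ and $b$ (and similarly $a,c$ and $b,c$) would need to have common neighbours in every one of the $q$ colours; the book structure in each ``bad'' colour $j$ would then force these common neighbours to lie inside the corresponding triangle $T_j$, and iterating this constraint around the triangle $T_i$ yields the desired contradiction — either directly via counting, or by producing an alternative recolouring of a different edge of $T_i$ or of a different bad triangle. The extra freedom afforded by having at least three colours is precisely what powers the recolouring, matching the fact, reported in the excerpt, that for $q=2$ the graphs $K_3$ and $K_3+K_2$ are \emph{not} equivalent.
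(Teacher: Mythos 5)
Your reduction to the statement that $G\to(K_3)_q$ if and only if $G\to(K_3+K_2)_q$ is the right one, and your structural observation is correct: in a $(K_3+K_2)$-free colouring, every colour-$i$ edge must meet the vertex set of any monochromatic colour-$i$ triangle $T_i$, so every monochromatic colour-$i$ triangle shares an edge with $T_i$. The gap lies in the step you yourself identify as the main obstacle. Your claimed contradiction relies on the common neighbours being ``forced to lie inside the corresponding triangle $T_j$'' by the book structure of colour $j$; but that structure is only available when colour $j$ itself contains a monochromatic triangle, whereas a common colour-$j$ neighbour $v$ of $a$ and $b$ only witnesses a colour-$j$ path $a$--$v$--$b$, which imposes no constraint at all when colour $j$ is triangle-free. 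Concretely, for $q=3$ take vertices $a,b,c$ together with six further vertices $d_{ab},d_{ac},d_{bc},e_{ab},e_{ac},e_{bc}$, where $d_{xy}$ and $e_{xy}$ are joined to exactly $x$ and $y$; colour the triangle $abc$ with colour $1$, all edges through the $d$'s with colour $2$ and all edges through the $e$'s with colour $3$. This colouring is $(K_3+K_2)$-free (colours $2$ and $3$ are triangle-free, and colour $1$ is a single triangle), it has exactly one monochromatic triangle, and yet for each edge of $abc$ both colours $2$ and $3$ are unsafe: recolouring, say, $ab$ with colour $2$ creates the monochromatic triangle $\{a,b,d_{ab}\}$ and even a monochromatic $K_3+K_2$ via the disjoint colour-$2$ edge $cd_{ac}$. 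So none of your single-edge moves on $T_1$ is available, and no contradiction can be derived -- indeed none exists, since this graph admits a $K_3$-free colouring reachable only by a different (e.g.\ two-edge) recolouring. The induction step as described can therefore get stuck.

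This is not merely a presentational issue: your argument is entirely local and never uses the hypothesis $G\to(K_3)_q$, whereas the paper's proof of the hard case $q=3$ is genuinely global. It uses that any $G$ with $G\to(K_3)_3$ has chromatic number at least $R_3(3)=17$ (which powers \thref{obsii}: three monochromatic triangles in three different colours already force a monochromatic $K_3+K_2$), together with the Bodkin--Szab\'o theorem (\thref{K6Distinguisher}) that a graph which is $2$-Ramsey for $K_3$ but not for $K_3+K_2$ contains $K_6$; the case $q\ge4$ is dispatched separately via the Ramsey-number criterion of \thref{GeneralLowerBdd} using $R_{q-1}(3)>3q$. To salvage your approach you would need either a larger move set (recolouring edges off the monochromatic triangle, or several edges at once) together with a potential function that still decreases, or else to import global information about $G$ of the kind the paper uses.
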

In fact, there are infinitely many pairs of graphs that are not 2-equivalent, yet they are $q$-equivalent for some $q\ge 3$. 
To see this let us first mention how the criterion in~\cite{szz2010} generalises to more than two colours. 
For integers $q,k_1,\ldots, k_q\ge 2$ let $R(k_1,\ldots,k_q)$ denote the smallest integer $n$ such that any colouring of the edges of $K_n$ with colours $[q]$ contains a monochromatic copy of $K_{k_i}$ in colour $i$, for some $i\in [q]$. We write $R_q(k_1,k_2,\ldots,k_2)$ when $k_2=k_3=\ldots=k_q$.
\begin{theorem}\thlab{GeneralLowerBdd}
Let $k,t,q$ be integers such that $q\ge 2$ and $k>t\ge 2$. If $s<(R_q(k-t+1,k,\ldots,k)-q(k-t))/qt$ then $K_k$ and $K_k+s K_t$ are $q$-equivalent.
\end{theorem}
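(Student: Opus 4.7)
The plan is to prove the non-trivial direction $G \to (K_k)_q \Rightarrow G \to (K_k + sK_t)_q$; the converse is immediate, since $K_k \subseteq K_k + sK_t$. Fix $c\colon E(G) \to [q]$ with $G \to (K_k)_q$ and suppose for contradiction that $c$ contains no monochromatic $K_k + sK_t$. The aim is to construct a $q$-colouring $\hat c$ of $E(G)$ with no monochromatic $K_k$, contradicting $G \to (K_k)_q$.

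For every colour $i \in [q]$ under which $c$ admits a monochromatic $K_k$, fix one such copy $K_i$ and let $\mathcal{T}_i$ be a maximum family of pairwise vertex-disjoint monochromatic $K_t$'s in colour $i$ that are also disjoint from $V(K_i)$. The assumption on $c$ forces $|\mathcal{T}_i| \le s-1$, so $W_i := V(K_i) \cup V(\mathcal{T}_i)$ has $|W_i| \le k + (s-1)t$; by the maximality of $\mathcal{T}_i$, the colour-$i$ subgraph $G_i[V(G) \setminus W_i]$ is $K_t$-free. (For colours without a monochromatic $K_k$ set $W_i = \emptyset$.) Writing $W := \bigcup_i W_i$, the hypothesis on $s$ implies
\begin{equation*}
|W| \le q(k + (s-1)t) = qts + q(k-t) < R_q(k-t+1, k, \ldots, k),
\end{equation*}
so there exists a $q$-colouring $\sigma$ of the complete graph on $W$ with no monochromatic $K_{k-t+1}$ in colour $1$ and no monochromatic $K_k$ in colours $2, \ldots, q$. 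Let $\hat c$ agree with $\sigma$ on edges inside $W$ and with $c$ on all remaining edges of $G$.

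The endgame is a case analysis confirming that $\hat c$ has no monochromatic $K_k$. Let $K$ be a putative $\hat c$-monochromatic $K_k$ in colour $i$. If $K \subseteq W$, then $K$ is $\sigma$-monochromatic and the choice of $\sigma$ immediately gives a contradiction. Otherwise $K$ meets $V(G) \setminus W$; the set $K \cap (V(G) \setminus W)$ is a $c$-monochromatic colour-$i$ clique inside the $K_t$-free graph $G_i[V(G) \setminus W_i]$, so $|K \cap (V(G) \setminus W)| \le t-1$ and $|K \cap W| \ge k-t+1$, with $K \cap W$ being $\sigma$-monochromatic in colour $i$. When $i = 1$, the $\sigma$-mono $K_{k-t+1}$ is the sought contradiction.

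The main obstacle will be the case $i \ge 2$, for which $\sigma$ does not directly preclude a monochromatic $K_{k-t+1}$. Our plan is to exploit the mixed $c/\sigma$ structure of $K$: each vertex $v \in K \cap W$ has $c$-colour-$i$ edges to all of $K \cap (V(G) \setminus W)$, so adjoining $v$ to this clique produces a $c$-monochromatic clique of size $|K \cap (V(G) \setminus W)| + 1$ in colour $i$; when $v \notin W_i$ this clique lies in $V(G) \setminus W_i$, and extracting $t - |K \cap (V \setminus W)|$ such vertices that are pairwise $c$-colour-$i$ connected (via a short Ramsey-type argument inside $K \cap W \setminus W_i$) yields a mono $K_t$ in $V(G) \setminus W_i$, contradicting $K_t$-freeness. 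The residual sub-case $K \cap W \subseteq W_i$ is handled by reapplying the Ramsey construction locally on $W_i$---whose size $k + (s-1)t$ is itself strictly less than $R_q(k-t+1, k, \ldots, k)$---with colour $i$ playing the role of the special colour. The fiddliest technical step is unifying the asymmetric Ramsey requirements for the different colours $i$ into a single colouring $\sigma$ on $W$; we anticipate this can be achieved either by choosing the $W_i$'s with additional disjointness structure or via a permutation argument exploiting the symmetry of $R_q(k-t+1, k, \ldots, k)$ in its last $q-1$ arguments.
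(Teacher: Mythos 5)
Your set-up is sound and closely parallels the paper's argument (the paper proves a slightly more general statement, \thref{Thm3Pt1FromTibor}, comparing $K_{a_1}+\cdots+K_{a_{s-1}}$ with $K_{a_1}+\cdots+K_{a_s}$ and iterates, but the choice of the witness sets $W_i$, the bound $|W|\le q(k+(s-1)t)<R_q(k-t+1,k,\ldots,k)$, and the recolouring of $G[W]$ are all the same idea). The gap is in your final recolouring: you let $\hat c$ agree with $c$ on all edges between $W$ and $V(G)\setminus W$, and this is exactly what makes the endgame unsalvageable. For a colour $i\ge 2$, a $\hat c$-monochromatic $K_k$ with $k-t+1\le |K\cap W|\le k-1$ is not excluded by anything: $\sigma$ only forbids a colour-$i$ $K_k$ inside $W$, and your proposed patch --- extracting $t-|K\cap(V\setminus W)|$ vertices of $K\cap W\setminus W_i$ that are pairwise joined in $c$-colour $i$ --- cannot work, because the edges inside $W$ carry colour $i$ only under $\sigma$; their $c$-colours are completely unconstrained, and a set of at most $k$ vertices is far too small for any Ramsey-type extraction of a $c$-monochromatic clique in a \emph{prescribed} colour. (The patch does work in the single boundary case $|K\cap(V\setminus W)|=t-1$ with $K\cap W\not\subseteq W_i$, where one vertex suffices, but not otherwise.) The residual sub-case $K\cap W\subseteq W_i$ is equally problematic: a single colouring $\sigma$ of $W$ cannot make colour $1$ special globally and colour $i$ special on $W_i$ simultaneously, and edges of $W_i$ to the rest of $W$ would still leak. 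A similar failure occurs for colours $i$ with $W_i=\emptyset$, where a crossing clique with at least two vertices in $W$ escapes both $c$ and $\sigma$.

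The fix, which is what the paper (following Szab\'o, Zumstein and Z\"urcher) does, is to recolour \emph{all} edges between $W$ and $V(G)\setminus W$ with the single special colour $1$ (taking colour $1$, without loss of generality, to be a colour in which $c$ has a monochromatic $K_k$). Then for $i\ge 2$ every colour-$i$ clique of $\hat c$ lies entirely inside $W$ (where $\sigma$ forbids a $K_k$) or entirely outside $W$ (where the colour-$i$ graph is $K_t$-free, hence $K_k$-free, by maximality of $\mathcal{T}_i$, or trivially $K_k$-free if $W_i=\emptyset$); and a crossing colour-$1$ clique has at most $t-1$ vertices outside and at most $k-t$ inside, so at most $k-1$ in total. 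With that single change your argument closes.
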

For $q=2$ and $t\le k-2$ this is Corollary~5.2~(ii) in~\cite{szz2010}, and the argument easily generalises to $q\ge 3$ colours. We provide the proof for completeness in Section~\ref{section4}. 
For $q=2$, \thref{GeneralLowerBdd}  
is known to be best possible up to a factor of roughly 2. Specifically, Fox, Grinshpun, Person, Szab\'o and the second author~\cite{fglps2014} show that for $k>t\ge 3$ the graphs $K_k$ and $K_k+s K_t$ are {\em not} $2$-equivalent if $s> (R(k-t+1,k)-1)/t$. This result implies the optimality of the equivalence of $K_k$ and $K_k+K_{k-1}$ in~\cite{bl2018} and the optimality up to a factor of roughly 2 in~\cite{szz2010} mentioned above. 
The consequence of this non-equivalence result in~\cite{fglps2014} and \thref{GeneralLowerBdd} is that, for given $k>t\ge 3$, the graphs $K_k$ and $K_k+sK_t$ are not 2-equivalent, but they are $q$-equivalent for some large enough $q$, if we take $s$ such that $(R(k-t+1,k)-1)/t < s< (R_q(k-t+1,k,\ldots,k)-q(k-t))/qt$.

The previous discussion shows that in general we cannot deduce non-$q$-equivalence for $q\ge 3$ from non-2-equivalence. However, all of the examples above that witness this phenomenon have at least one of $H$, $H'$ being disconnected. 
When both graphs $H$ and $H'$ are 3-connected or isomorphic to $K_3$ then $H$ and $H'$ are not $q$-equivalent for any $q\ge 2$, by \thref{dennisObs}. In fact, it remains an open question, first posed in~\cite{fglps2014}, whether there are two non-isomorphic connected graphs $H$ and $H'$ that are 2-equivalent. 
A theorem by Ne\v set\v ril and R\"odl~\cite{nr1976} implies that any graph that is $q$-equivalent to the clique $K_k$, for some $q\ge 2$, needs to contain $K_k$ as a subgraph. Fox, Grinshpun, Person, Szab\'o and the second author~\cite{fglps2014} show that $K_k$ is not $2$-equivalent to $K_k\cdot K_2$, the graph on $k+1$ vertices formed by adding a pendant edge to $K_k$. We lift this result to any number of colours. 
\begin{theorem}\thlab{thm:nonequiv}
For all $k, q\geq 3$, $K_k$ and $K_k\cdot K_2$ are not $q$-Ramsey equivalent.
\end{theorem}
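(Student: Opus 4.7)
The plan is to exhibit, for each $q\geq 3$, a graph $G\in\cM_q(K_k)\setminus\cM_q(K_k\cdot K_2)$. Following the approach of \cite{fglps2014} for $q=2$, we aim to produce such a $G$ via a minimum-degree comparison: if every graph that is $q$-Ramsey for $K_k\cdot K_2$ has minimum degree strictly larger than $s_q(K_k)$, then any $G\in\cM_q(K_k)$ with $\delta(G)=s_q(K_k)$ is not $q$-Ramsey for $K_k\cdot K_2$, and hence lies in $\cM_q(K_k)\setminus\cM_q(K_k\cdot K_2)$, as required.

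For the upper bound on $s_q(K_k)$ we rely on the known estimates of \cite{fglps2016,gw2017,hrs2018}; alternatively, an explicit low-minimum-degree member of $\cM_q(K_k)$ can be constructed via \thref{thm:containment} applied to a suitable $F$ that is not $q$-Ramsey for $K_k$. The substantive content is the lower bound: we must show that every $G\to (K_k\cdot K_2)_q$ has minimum degree at least some $D_q(k)$ strictly exceeding the above upper bound. Adapting the critical-colouring analysis of \cite{fglps2014}, we assume towards contradiction a vertex $v\in V(G)$ of small degree $d$; colour the $d$ edges at $v$ from a carefully chosen palette, and extend to a $q$-colouring of $G-v$ using a witness that the link of $v$ (or $G-v$ itself) is not $q$-Ramsey for $K_k$. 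A pigeonhole argument on the colours available at $v$ should show that, for $d$ below $D_q(k)$, the combined colouring can be arranged so that every monochromatic $K_k$ misses a pendant edge of its own colour, contradicting $G\to (K_k\cdot K_2)_q$.

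The main obstacle is this lower bound. In $q\geq 3$ colours the pendant-edge constraint can in principle be avoided using any of $q-1$ residual colours, so the tight case analysis of the two-colour argument does not transfer verbatim. A refined argument, possibly via induction on $q$ or via an auxiliary $(q-1)$-colour Ramsey input bounding the size of individual colour classes at $v$, is needed to ensure that, despite this additional local flexibility, the global coherence forced by the family of critical colourings yields a minimum degree strictly above $s_q(K_k)$.
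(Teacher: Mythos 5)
Your strategy hinges on the claim that every graph that is $q$-Ramsey(-minimal) for $K_k\cdot K_2$ has minimum degree strictly exceeding $s_q(K_k)$, i.e.\ essentially that $s_q(K_k\cdot K_2)>s_q(K_k)$. This is false in the one case where the answer is known: for $q=2$ one has $s_2(K_k)=(k-1)^2$ by Burr, Erd\H{o}s and Lov\'asz, while the construction in~\cite{fglps2014} yields a $2$-minimal graph for $K_k\cdot K_2$ containing a vertex of degree $k-1$, so $s_2(K_k\cdot K_2)\le k-1$. Attaching a pendant edge makes the minimum-degree parameter collapse rather than grow, and for $q\ge 3$ it is posed as an open problem in this very paper whether $s_q(K_k\cdot K_2)\le s_q(K_k)$; your plan requires the opposite strict inequality, so the ``substantive lower bound'' you defer cannot be established. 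There is also a secondary logical gap: even granting such a bound for minimal Ramsey graphs of $K_k\cdot K_2$, a graph $G\in\cM_q(K_k)$ with $\delta(G)=s_q(K_k)$ could still be $q$-Ramsey for $K_k\cdot K_2$ by properly containing a minimal one of larger minimum degree; one would need the bound in the vertex-deletion form (deleting a low-degree vertex preserves the Ramsey property) to draw your conclusion.

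The paper avoids minimum degrees entirely and instead builds an explicit distinguishing graph by induction on $q$. The base case $q=2$ is the graph $G_2$ from~\cite{fglps2014} with $G_2\to(K_k)_2$ and $G_2\nto(K_k\cdot K_2)_2$. Given $G_q$, the graph $G_{q+1}$ consists of a copy of $G_q$ on a set $V_0$, further sets $V_1,\ldots,V_{k-2}$ each carrying $q$ pairwise edge-disjoint $(n,r,k-1)$-critical graphs, all complete bipartite graphs between the $V_i$, and signal senders forcing, in any $K_k$-free colouring, the $j$-th critical graph of each block to be monochromatic in colour $j$. A pigeonhole lemma then extracts a complete $(k-2)$-partite structure in colour $q+1$ joined to $V_0$ entirely in colour $q+1$, which forbids colour $q+1$ inside $V_0$ and reduces to $G_q\to(K_k)_q$; conversely, colouring all crossing edges with colour $q+1$ extends the inductive $(K_k\cdot K_2)$-free colouring of $V_0$, since every monochromatic $K_k$ then lies inside $V_0$ and has no pendant edge of its own colour. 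Any minimal $q{+}1$-Ramsey subgraph of $G_{q+1}$ for $K_k$ then witnesses $\cM_{q+1}(K_k)\neq\cM_{q+1}(K_k\cdot K_2)$. If you wish to salvage a degree-based argument, you would first have to identify a parameter that genuinely separates the two Ramsey families; minimum degree points in the wrong direction.
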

Together with the result in~\cite{nr1976} this implies that, for all $q\ge 3$, $K_k$ is not $q$-equivalent to any connected graph other than $K_k$. 
We wonder whether one can prove in general that if two graphs $H$ and $H'$ are connected and not 2-equivalent, then they are not $q$-equivalent for any $q\ge 3$. In our proof of \thref{thm:nonequiv} the graph $K_k$ cannot be replaced by, say, $K_k$ missing an edge. 

The rest of the paper is organised as follows. In Section 2, we fix our notation and describe the method of signal senders. We also include the proof of \thref{dennisObs} there. In Section 3 we prove \thref{thm:containment,wow}. Section 4 contains the results related to Ramsey equivalence, that is we prove \thref{GeneralLowerBdd}, which we obtain as a corollary to a slightly more general result, as well as both \thref{thm:3equiv} and \thref{thm:nonequiv}. In the final section we discuss open problems.

\section{Preliminaries}\lab{sec:preliminaries}

{\bf Notation.} 
For a graph $G=(V,E)$ we write $V(G)$ and $E(G)$ for its vertex set and edge set, respectively, 
and we set $v(G)=|V(G)|$ and $e(G)=|E(G)|$. Throughout the paper we assume that $E(G)\se \binom{V(G)}{2}$ and that both $V$ and $E$ are finite. 
A graph $F$ is called a {\em subgraph of a graph $G$}, denoted by $F\se G$, 
if $V(F)\se V(G)$ and $E(F)\se E(G)$. 
Let $G$, $F$, and $H$ be graphs such that $F\se G$ and $V(G)\cap V(H) =\emptyset$. 
We write $G-F$ for the graph with vertex set $V(G)$ and edge set $E(G)\setminus E(F)$; 
and $G+H$ for the graph formed by the vertex-disjoint union of $G$ and $H$, i.e.~the graph with vertex set $V(G)\cup V(H)$ and edge set $E(G)\cup E(H)$. When $F$ or $H$ consist of a single edge $e$ we also write $G-e$ and $G+e$, respectively. 
For a subset $A\subseteq V(G)$ denote by $G[A]$ the {\em induced subgraph} on $A$, i.e.~the graph with vertex set $A$ and edge set consisting of all edges of $G$ with both endpoints in $A$. 
A subgraph $F$ of $G$ is called an {\em induced subgraph} if $F = G[V(F)]$. 
Given a path $P$ in a graph $G$, the {\em length of $P$} is the number of edges of $P$. 
For two subsets $A,B\subseteq V(G)$, we write $\dist_G(A,B)$ 
for the distance between $A$ and $B$, i.e.~the length
of a shortest path in $G$ with one endpoint in $A$
and the other endpoint in $B$. Given a subgraph $F\subseteq G$, 
we also write $\dist_G(A,F)$ for $\dist_G(A,V(F))$ and $\dist_G(A,e)$ if $F$ consists of a single edge $e$. 
A {\em $q$-colouring of a graph $G$} is a function $c$ that assigns colours to edges, where the set $S$ of colours has size $q$ and, unless specified otherwise, we assume that $S=[q]=\{1,\ldots,q\}$. 
We call a $q$-colouring {\em $H$-free} if there is no monochromatic copy of $H$.  

\bigskip

{\bf Signal senders.} 
For the proofs of \thref{thm:containment,dennisObs,thm:nonequiv} 
we use the idea of signal sender graphs which was first introduced by Burr, Erd\H{o}s and Lov\'asz~\cite{bel1976}. 
Let $H$ be a graph and $q\ge 2$ and $d\ge 0$ be integers. 
A \emph{negative (positive) signal sender $S=S^-(q, H,d)$ ($S=S^+(q, H,d)$)} 
is a graph $S$ containing distinguished edges $e,f\in E(S)$ 
such that 
\begin{enumerate}
\item[(S1)] $S\nto (H)_q$; 
\item[(S2)] in every $H$-free $q$-colouring of $E(S)$, 
	the edges $e$ and $f$ have different (the same) colours; and 
\item[(S3)] $\dist_S(e,f)\ge d$. 
\end{enumerate}

The edges $e$ and $f$ in the definition above are called 
\emph{signal edges} of $S$.
The following was proved by R\"odl and Siggers~\cite{rs2008}, generalising earlier proofs by 
Burr, Erd\H{o}s and Lov\'asz~\cite{bel1976} and by Burr, Ne\v{s}et\v{r}il and R\"odl~\cite{bnr1984}.

\begin{lemma}\thlab{lem:signal}
Let $H$ be 3-connected or $H=K_3$, and let $q,d\geq 2$ be integers. Then there exist 
negative and positive signal senders $S^-(q, H, d)$ and $S^+(q, H, d)$. 
\end{lemma}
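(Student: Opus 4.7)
The plan is to follow the approach of R\"odl and Siggers, which itself refines earlier constructions of Burr--Erd\H{o}s--Lov\'asz and Burr--Ne\v{s}et\v{r}il--R\"odl. The argument has three layers: first build a ``base'' negative gadget with some small, uncontrolled distance between its two signal edges; then boost the distance to any prescribed $d$ by a standard doubling; finally produce positive senders from negative ones by an additional gluing step. Since all three layers repeatedly use the same combinatorial move, namely identifying two graphs along a small set of vertices or along a distinguished edge, I would focus on making the base construction work and then verify that each subsequent identification preserves the key properties.

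The base negative gadget is built from a $q$-Ramsey-minimal graph $M$ for $H$, which exists by Ramsey's theorem. Fix an arbitrary edge $e_0\in E(M)$. Minimality gives an $H$-free $q$-colouring $c_0$ of $M-e_0$; since $M\to (H)_q$, for each colour $i\in [q]$ the colouring $c_i$ obtained from $c_0$ by colouring $e_0$ with $i$ contains a monochromatic copy of $H$, and as $c_0$ itself is $H$-free this copy $H_i\se M$ must contain $e_0$ and is therefore monochromatic in colour $i$. I would then take several copies of $(M,e_0)$, identify them along pairs of corresponding copies of $e_0$, and select two suitable edges $e,f$ of the resulting graph $S$ as signal edges. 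The identification pattern is chosen so that assigning equal colours to $e$ and $f$ in any $q$-colouring of $S$ forces some copy of $H_i$ to be monochromatic, thereby establishing~(S2); property~(S1) is ensured by exhibiting the colouring obtained by gluing consistent copies of~$c_0$, which remains $H$-free on each piece.

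The main obstacle is to guarantee that no ``alien'' copy of $H$ appears across an identification. Any such copy would split into two parts, one in each glued piece, meeting in at most the two vertices of the identified edge; this is exactly a separator of $H$ of order at most~$2$, which is excluded by the hypothesis that $H$ is $3$-connected. The case $H\cong K_3$ falls outside $3$-connectedness and has to be treated separately: here one exploits the small size of $K_3$ by arranging identifications so that the two identified endpoints share no common neighbour in any glued copy, which prevents the creation of new triangles.

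Once the base negative sender exists, a doubling argument delivers $S^-(q,H,d)$ for arbitrary $d$: given a sender $S_0$ with signal edges at distance $\ell\ge 1$, take two disjoint copies $S_0',S_0''$, identify the ``far'' signal edge of $S_0'$ with the ``near'' signal edge of $S_0''$, and keep the remaining two signal edges as those of the new sender, whose signal distance is at least~$2\ell$. The absence of alien copies of $H$ is again handled by $3$-connectedness or by the $K_3$-specific argument. Finally, a positive sender is obtained from a small number of negative senders by a further identification that uses the availability of all $q\ge 2$ colours; the analysis is a direct case distinction based on~(S2) and the Pigeonhole principle.
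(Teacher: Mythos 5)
Note first that the paper itself does not prove this lemma: it is quoted from R\"odl and Siggers, so a genuine proof would have to reproduce their construction. Your outline has the right architecture (a base gadget extracted from a $q$-Ramsey-minimal graph $M$, distance amplification, conversion from negative to positive senders, and control of ``alien'' copies of $H$ via $3$-connectivity), but two of its steps fail. The first and most serious gap is the base negative sender. The preliminary observation about $M$, $e_0$, $c_0$ and the copies $H_i$ is correct, but the sentence ``the identification pattern is chosen so that assigning equal colours to $e$ and $f$ \ldots forces some copy of $H_i$ to be monochromatic'' is exactly the content of the lemma and is asserted rather than constructed. Worse, read literally --- gluing copies of $(M,e_0)$ along the copies of the \emph{edge} $e_0$ --- every piece still contains $M$ as a subgraph, so $S\supseteq M\to (H)_q$ and (S1) fails outright. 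If instead you glue copies of $M-e_0$ along the non-edge slot $\{x,y\}$, then (S1) is salvageable, but no mechanism for (S2) remains: the monochromatic copies of $H$ guaranteed by $M\to(H)_q$ exist only relative to whichever $H$-free colouring each piece actually receives (not relative to the fixed $c_0$), and you have not said which actual edges serve as $e$ and $f$ or why their receiving equal colours completes a monochromatic copy of $H$. This is where essentially all of the difficulty in Burr--Erd\H{o}s--Lov\'asz, Burr--Ne\v{s}et\v{r}il--R\"odl and R\"odl--Siggers lies, particularly for $q\ge 3$.

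The second problem is the distance-doubling step. If $S_0'$ and $S_0''$ are \emph{negative} senders and you identify a signal edge of one with a signal edge of the other, an $H$-free colouring only yields $c(e')\neq c(g)$ and $c(g)\neq c(f'')$ for the middle edge $g$: for $q=2$ this forces $c(e')=c(f'')$, i.e.\ you have built a \emph{positive} sender, and for $q\ge 3$ it forces no relation at all between $c(e')$ and $c(f'')$. To lengthen a negative sender one must concatenate it with a long \emph{positive} sender (negative composed with positive stays negative), and the long positive senders must be obtained first --- e.g.\ via the rainbow gadget in which $e$ and $f$ are each joined by negative senders to $q-1$ pairwise negatively-linked auxiliary edges; that part of your sketch is essentially right, as is the alien-copy analysis via $2$-separators and the separate treatment of $K_3$. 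As it stands, though, the proposal establishes neither the existence of the base negative sender nor the claimed amplification, so it does not prove the lemma.
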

In the proofs of \thref{thm:containment,dennisObs,thm:nonequiv} 
we construct graphs using several signal senders.  
Assume that $G$ is some graph and let $e_1,e_2\in E(G)$ be two disjoint edges. 
We say that we {\it join $e_1$ and $e_2$ by a signal sender $S(q,H,d)$} 
if we add a vertex disjoint copy $\tilde S$ of a signal sender $S(q,H,d)$ to $G$ and then identify the signal edges of $\tilde S$ with 
$e_1$ and $e_2$, respectively. 

\thref{dennisObs} is an easy consequence of the existence of signal senders, we prove it here to serve as a simple example of the method of signal senders. 
\begin{proof}[Proof of \thref{dennisObs}]
Without loss of generality let $H\nse H'$. 
Let $S = S^+(q, H', d)$ be a positive signal sender, where 
$d = v(H)+1$. If $S \to (H)_q$, then we are done since $S \nto (H')_q$ by (S1). So we may
assume that there is an $H$-free colouring $\phi :  E(S) \to [q]$. 
Now construct a graph $G$ as follows. Fix a copy $\tilde H$ of $H'$ and an edge $e$ that is vertex-disjoint
from $\tilde H$. Then, for every $f \in E(\tilde H)$ join $e$ and $f$ by a copy of the signal sender $S$ so that $e$ is always identified with the same signal edge of $S$. 
Then, $G \to (H')_q$. Indeed, for a $q$-colouring of $G$, there is a monochromatic copy of $H'$ in one of the copies of the signal sender $S$, or every edge in $\tilde H$ has the same colour as $e$, by (S2) and by construction of $G$. In either case, there is a monochromatic copy of $H'$. 

Moreover, $G \nto (H)_q$. Consider the colouring of $E(G)$ defined by colouring each copy of $S$ using $\phi$. Note that any two copies of $S$ intersect in the edge $e$ only (and at most one vertex in $\tilde H$). Since $e$ is always identified with the same signal edge in $S$ this colouring is well-defined. 
Now every copy of $H$ in $G$ is contained in a copy of the signal sender $S$ 
since $H\nse H'$, $H$ is 3-connected or $H\cong K_3$, and since $\dist_G(e,\tilde H) > v(H)$ by choice of $S$ and (S3). 
However, $\phi$ is $H$-free (on each copy of $S$), so none of these copies of $H$ is monochromatic.
\end{proof}


\section{Proof of \thref{thm:containment}}
\lab{sec:containment}
In order to prove \thref{thm:containment} we first establish the existence of certain gadget graphs. 
Let $F$ and $H$ be graphs and let $d,q\geq 2$ be integers. 
Let $G$ be a graph containing both an induced subgraph $\tilde F$ 
that is isomorphic to $F$ 
and an edge $e$ that is vertex-disjoint from $\tilde F$. 
$G$ is called an \emph{$(H, F, e,q,d)$-indicator} 
if $\dist_G(\tilde F,e)\ge d$ and 
the following hold for every $i,j\in [q]$: 

\begin{enumerate}
\item[(I1)] There exists an $H$-free $q$-colouring of $G$ 
	such that $\tilde F$ is monochromatic of colour $i$.
\item[(I2)] In every $H$-free $q$-colouring of $G$ 
	in which $\tilde F$ is monochromatic of colour $i$, $e$ has colour~$i$. 
\item[(I3)] If $f$ is any edge of $\tilde F$, then there exists an 
	$H$-free colouring of $G-f$ in which $\tilde F-f$ is monochromatic of colour~$i$ 
	and in which $e$ has colour~$j$.
\end{enumerate}

Note that it would be enough to say that the subgraphs and edges in the Properties~(I2) and~(I3) above should have the same or different colours respectively, without mentioning explicit colours $i$ and $j$ (since we can
swap the colours by symmetry). Nevertheless, we find it more convenient
to state the properties in the above manner, so that we do not need 
to repeat the argument of swapping colours over again.

The notion of indicators for $q=2$ was introduced by Burr, Faudree and Schelp~\cite{BFS} 
who established their existence in the case when $H$ is a clique and $F\not\supseteq H$, but with $d$ not being specified; see Lemma 3 in \cite{BFS}. 
We find the definition above to be a suitable generalisation for $q\ge 3$ to be able to prove existence while still being useful gadgets for the proof of \thref{thm:containment}. 

By definition it is necessary that $F$ does not contain a copy of $H$ for an $(H,F,e,q,d)$-indicator to exist. Under the assumption that $H$ is suitably connected this turns out to be sufficient. We need one more ingredient though which allows us to combine indicators (and signal senders) by identifying certain edges without creating new copies of $H$. 
We say that an $(H, F, e,q,d)$-indicator $G$ {\em \tentacle}
if there is a collection of subgraphs $\{T_f\se G \mid f\in E(\tilde F)\}$ 
such that 
\begin{enumerate}
\item[(T1)] $V(T_f)\cap V(\tilde F)=f$ and $f\in E(T_f)$ for all $f\in E(\tilde F)$,
\item[(T2)] $V(G)= \bigcup_{f\in E(\tilde F)} V(T_f)$ and $E(G) = \bigcup_{f\in E(\tilde F)} E(T_f)$, and
\item[(T3)] for all distinct $f_1,f_2\in E(\tilde F)$ and all $v\in V(T_{f_1})\cap V(T_{f_2})$ it holds that
$v\in V(\tilde F)$ or $\dist_G(v,\tilde F)\geq d$, 
\end{enumerate}
where $\tilde F$ is the fixed induced copy of $F$ in $G$.

\begin{lemma}\thlab{lem:indicator}
Let $H$ be 3-connected or $H=K_3$, let $F$ be a graph that does not contain a copy of $H$, let $e$ be an edge that is vertex-disjoint from $F$, and let $q,d\geq 2$ be integers. 
Then there exists an $(H, F, e,q,d)$-indicator $G$ that \tentacle . 
\end{lemma}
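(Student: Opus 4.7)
The plan is to construct $G$ by attaching, at each edge of $\tilde F$, a tentacle $T_f$ built from positive signal senders (\thref{lem:signal}) together with auxiliary copies of $F$ which act as ``mirrors'' transmitting the common colour of $\tilde F$ to the distinguished edge $e$. Fix throughout a parameter $D \ge d$ large enough that every signal sender $S^+(q,H,D)$ used below has signal-edge distance at least $D$. Take an induced copy $\tilde F$ of $F$ and a vertex-disjoint copy $F^*$ of $F$ with an isomorphism $\phi \colon F \to F^*$. For every $f \in E(\tilde F)$, attach a positive signal sender whose two signal edges are identified with $f$ and $\phi(f)$, choosing these senders to be otherwise pairwise vertex-disjoint. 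Let $e$ be a fresh edge, and join a distinguished edge $a \in E(F^*)$ to $e$ by one further positive signal sender. Put $F^*$ and this output sender into the tentacle at $f_0 := \phi^{-1}(a)$, and let $T_f$ for $f \ne f_0$ be just the sender attached at $f$.

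Properties (T1)-(T3), (I1) and (I2) then follow directly. The tentacle property holds because distinct signal senders are vertex-disjoint except at their shared signal edges, and $D \ge d$ forces the interior of every sender, together with all of $F^*$, to lie at distance $\ge d$ from $\tilde F$. For (I1) colour $\tilde F$, $F^*$ and $e$ monochromatically of colour $i$ and extend each signal sender $H$-freely using (S1) and (S2). For (I2), (S2) applied to every sender propagates colour $i$ from $\tilde F$ through $F^*$ to $a$, and thence to $e$ via the output sender. The colourings produced are $H$-free on $G$ because, exactly as in the proof of \thref{dennisObs}, the $3$-connectedness of $H$ (or the $K_3$ case) forces every copy of $H$ in $G$ to lie inside a single atom ($\tilde F$, $F^*$, or one of the signal senders), and each atom is $H$-free by construction or by the hypothesis $F \not\supseteq H$.

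The main obstacle is property (I3). With the construction above the argument works cleanly only when the removed edge is the specific $f_0$: then the sender at $f_0$ loses one signal edge, $a = \phi(f_0)$ is freed, and one colours $a = j$ to force $e = j$ via the output sender. For $f \ne f_0$, however, the surviving senders still synchronise $F^*$ to colour $i$, so $a$ stays pinned to $i$ and hence $e = i$ is forced, defeating (I3) whenever $j \ne i$. To fix this I would refine the construction by taking one auxiliary copy $F^*_f$ of $F$ \emph{per} edge $f \in E(\tilde F)$, each with its own distinguished ``slack'' edge $a_f := \phi_f(f)$, and routing the output senders from the $a_f$'s to $e$ through a combining gadget designed so that removing any $f$ releases exactly the constraint on $e$ coming through the corresponding tentacle $T_f$ while leaving the remaining tentacles consistent. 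The hard part of the proof is the design and verification of this combining gadget: one has to exhibit, for every triple $(f,i,j)$, a single $H$-free colouring of $G-f$ realising $\tilde F - f$ monochromatic of colour $i$ and $e$ of colour $j$, without running into an over-constraint from any of the surviving tentacles $T_{f'}$. Once this is achieved, $H$-freeness and the tentacle property are preserved because each new atom is still $H$-free and the senders continue to keep internal vertices at distance $\ge d$ from $\tilde F$.
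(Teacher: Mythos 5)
There is a genuine gap, and you have in fact located it yourself: everything in your construction except property (I3) is routine, and (I3) is the entire content of the lemma. Your first construction establishes (I1), (I2) and Property~\T, but as you observe it fails (I3) for every $f\neq f_0$. Your proposed repair -- one auxiliary copy $F^*_f$ per edge $f$, each with a slack edge $a_f$, fed into a ``combining gadget'' -- does not resolve the difficulty; it merely renames it. The combining gadget must satisfy: if all of the inputs $a_{f_1},\dots,a_{f_m}$ have colour $i$ then $e$ has colour $i$, while for each single $f$ there is an $H$-free colouring in which $a_f$ has colour $j$, the remaining $a_{f'}$ have colour $i$, and $e$ has colour $j$. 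That is precisely an $(H,M,e,q,d)$-indicator for a matching $M$ of $m$ edges (with a strengthened version of (I3)), i.e.\ the special case of the lemma you are trying to prove in which $F$ is a matching. No construction of such a gadget is given, and it cannot be assembled from positive signal senders alone: a chain of positive senders propagates a single colour and can never implement the required ``all inputs agree $\Rightarrow$ output agrees, any one input released $\Rightarrow$ output free'' behaviour.

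For comparison, the paper's proof supplies exactly this missing piece and then makes the reduction you sketched rigorous by induction on $e(F)$: an $(H,F,e,q,d)$-indicator is built from an indicator for $F-f_1$ with auxiliary output edge $e_1$ together with an indicator $G_2$ for the two-edge matching $\{f_1,e_1\}$ satisfying a strengthened property $(I3')$ (\thref{Case2}). The construction of $G_2$ is the technical heart: it uses $q-1$ auxiliary matching edges $e_1,\dots,e_{q-1}$, negative signal senders forcing these to take the $q-1$ colours different from the common colour $i$ of $f_1,f_2$, and $q-1$ copies of $H$ glued along the output edge $e$ whose remaining edges are forced monochromatic in the colours of the $e_k$ by positive senders, so that $e$ must avoid all colours other than $i$. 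Nothing of this kind appears in your proposal, so as written the proof is incomplete. A secondary, more minor point: your appeal to the argument of \thref{dennisObs} for the claim that every copy of $H$ lies in a single atom would also need to be checked carefully once the senders share vertices along adjacent edges of $\tilde F$ and $F^*$ (the paper devotes a nontrivial portion of its proof, via conditions (T1)--(T3), to exactly this verification), but that issue is repairable, whereas the absence of the combining gadget is not.
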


Similar to the convention for signal senders we say that, for given graphs $F\se G$ and an edge $e\in E(G)$ that is vertex-disjoint from $F$, we {\it join $F$ and $e$ by an $(H,F,e,q,d)$-indicator}  when we add a vertex-disjoint copy of an $(H,F,e',q,d)$-indicator $G'$ to $G$ and identify the copy of $F$ in $G'$ with $F\se G$ and identify the edge $e'$ in $G'$ with $e$ in $G$. 

The proof of \thref{lem:indicator} proceeds by induction on $e(F)$. When $F$ is a matching of two edges, however, we need gadget graphs with a stronger property than $(I3)$. We prove their existence first. 

\begin{lemma}\thlab{Case2}
Let $H, F, e, q, d$ be as in \thref{lem:indicator} and assume that $F=\{f_1,f_2\}$ is a matching. Then there exists an $(H, F, e,q,d)$-indicator $G_2$ with $\dist_{G_2}(f_1,f_2)\geq d$ that \tentacle, where instead of $(I3)$ we have that 
\begin{enumerate}
\item[$(I3')$] for $\ell\in\{1,2\}$ there exists an $H$-free colouring of $G_2$ 
	in which $f_\ell$ has colour $i$ and both, $e$ and $f_{3-\ell}$ have colour $j$.
\end{enumerate} 
\end{lemma}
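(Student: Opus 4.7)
The plan is to construct $G_2$ by starting with three pairwise vertex-disjoint edges $f_1,f_2,e$ placed at mutual distance at least $d$, and then augmenting this base with positive and negative signal senders from \thref{lem:signal} so that the $H$-free colourings of $G_2$ encode the required behaviour of $(c(f_1),c(f_2),c(e))$. The naive attempt---joining $f_1$ to $e$ and $f_2$ to $e$ by two positive signal senders---secures $(I2)$ (it even forces $c(e)=c(f_1)=c(f_2)$ in every $H$-free colouring), but fails $(I3')$, since no $H$-free colouring then admits $c(f_1)\ne c(f_2)$. I therefore aim for a weaker, \emph{disjunctive} constraint $c(e)\in\{c(f_1),c(f_2)\}$, which still implies $(I2)$ (when $c(f_1)=c(f_2)=i$ the set collapses to $\{i\}$) but is compatible with $(I3')$.

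Concretely, I introduce auxiliary edges $g_1,g_2$ placed at distance at least $d$ from $\tilde F$, join $f_i$ to $g_i$ by a positive signal sender of internal distance at least $d$ (so that $c(g_i)=c(f_i)$ in every $H$-free colouring), and attach a central auxiliary gadget $C$ on $g_1,g_2,e$ engineered so that its $H$-free colourings restricted to the three signal edges $(g_1,g_2,e)$ are exactly those triples $(a,b,c)$ with $c\in\{a,b\}$. The tentacles for Property~$\cT$ are then taken to be $T_{f_i}:=$ the union of $f_i$, the positive signal sender from $f_i$ to $g_i$, and a ``$g_i$-half'' of $C$ (together with $e$); the halves are chosen so that $T_{f_1}\cup T_{f_2}=G_2$ and $V(T_{f_1})\cap V(T_{f_2})$ consists only of vertices in the central part of $C$ and of $e$, all at distance at least $d$ from $\tilde F$ by the choice of signal-sender distances, which yields (T1)--(T3).

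With $C$ in hand, the verification of $(I1)$, $(I2)$, $(I3')$ is routine. For $(I1)$ one colours everything with a single colour $i$: this extends $H$-freely through each positive signal sender by (S2) and through $C$ by its construction. For $(I2)$, any $H$-free colouring with $c(f_1)=c(f_2)=i$ propagates via the positive signal senders to $c(g_1)=c(g_2)=i$, and the disjunctive property of $C$ then forces $c(e)\in\{i\}$. For $(I3')$ with $\ell=1$ and $i\ne j$, set $c(f_1)=c(g_1)=i$ and $c(f_2)=c(g_2)=j$, and pick an $H$-free colouring of $C$ realising $(g_1,g_2,e)=(i,j,j)$; extending through the two signal senders gives the required $H$-free colouring of $G_2$, and the case $\ell=2$ is symmetric.

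The main obstacle is the explicit construction of the central disjunctive gadget $C$. Signal senders provide only binary equality/inequality constraints on pairs of edges, and encoding a genuine disjunction ``$c(e)=c(g_1)\vee c(e)=c(g_2)$'' requires a careful combination of several positive and negative signal senders around further auxiliary edges, designed so that every assignment of $(g_1,g_2,e)$ to a ``bad'' triple $(a,b,c)$ with $c\notin\{a,b\}$ is forced to contain a monochromatic copy of $H$ inside $C$, while every ``good'' triple with $c\in\{a,b\}$ extends to an $H$-free colouring of $C$. This is where the structural hypothesis on $H$ (3-connected or $K_3$) enters through \thref{lem:signal}, and where the gadget engineering analogous to the indicator constructions of Burr--Erd\H{o}s--Lov\'asz and Burr--Faudree--Schelp becomes essential.
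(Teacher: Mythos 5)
Your high-level plan (replace the too-rigid ``$c(e)=c(f_1)=c(f_2)$'' constraint by a weaker one that still yields $(I2)$ but is compatible with $(I3')$) is sound, but the proof has a genuine gap: the central ``disjunctive gadget'' $C$, whose $H$-free colourings restricted to $(g_1,g_2,e)$ are supposed to be exactly the triples with $c(e)\in\{c(g_1),c(g_2)\}$, is never constructed. You explicitly defer this as ``the main obstacle'', but it is the entire content of the lemma. Signal senders only impose pairwise equality or inequality constraints between edges, and a disjunction of equalities is not obtained by wiring more of them together around auxiliary edges; some device other than signal senders is needed to \emph{exclude} colours from $e$. The mechanism that actually works here, and which your proposal does not identify, is to introduce $q-1$ copies $H_1,\dots,H_{q-1}$ of $H$ itself, pairwise intersecting in a single edge that is identified with $e$, together with a matching $e_1,\dots,e_{q-1}$ of auxiliary edges. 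Negative signal senders joining $f_1$ to $e_1$, $f_2$ to each $e_k$ with $k\ge 2$, and the $e_k$ pairwise force $\{c(e_1),\dots,c(e_{q-1})\}$ to be precisely the $q-1$ colours other than the common colour of $\tilde F$ whenever $\tilde F$ is monochromatic; positive signal senders from $e_k$ to every edge of $H_k-e$ then make $H_k-e$ monochromatic of colour $c(e_k)$, so that $e$ cannot receive any of these $q-1$ colours without completing a monochromatic $H_k$. This gives $(I2)$, and for $(I3')$ one simply arranges the colours of the $e_k$ so that $j$ is not among them, freeing $e$ (and $f_{3-\ell}$) to take colour $j$. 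Without some such colour-exclusion gadget built from copies of $H$ sharing the edge $e$, the claimed properties of your $C$ remain an unproved assertion, and the rest of your argument has nothing to stand on.

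Two smaller points. First, your verification of $(I1)$ by ``colouring everything with a single colour $i$'' cannot be meant literally: a monochromatic signal sender would typically contain a monochromatic copy of $H$; one must colour only the designated edges with colour $i$ and extend each signal sender separately by an $H$-free colouring guaranteed by $(S1)$ and $(S2)$. Second, any construction of this kind also requires an argument that \emph{every} copy of $H$ in $G_2$ lies entirely inside one signal sender or coincides with one of the glued copies of $H$ (using that $H$ is $3$-connected or $K_3$ and that signal edges are far apart); this containment argument is what makes the local $H$-free colourings combine into a global one, and it is absent from your sketch.
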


\begin{proof}
We construct $G_2$ as follows. Start with a copy $\tilde F$ of $F$ and an edge $e$ that is vertex disjoint from $F$. By a slight abuse of notation we refer to $f_1$ and $f_2$ for the copies of the two edges of $F$. 
Let $\{e_1,e_2,\ldots,e_{q-1}\}$ be a matching of $q-1$ edges that are vertex-disjoint from $\tilde F$ and $e$. Let $H_1,H_2,\ldots,H_{q-1}$ be copies of $H$ that are vertex-disjoint from $f_1,f_2,e_1,e_2,\ldots,e_{q-1}$ and such that any two copies $H_i$ and $H_j$ intersect in one fixed edge which we identify with $e$. 
Furthermore, 
\begin{enumerate}
\item[(i)] join $f_1$ and $e_1$ by a negative signal sender $S_1=S^{-}(q,H,d)$ 
	and for every $2\leq k\leq q-1$ join $f_2$ and $e_k$ by a negative signal sender 
	$S_k=S^{-}(q,H,d)$; 
\item[(ii)] for every $1\leq k<\ell < q$ join $e_k$ and $e_{\ell}$ 
	by a negative signal sender $S_{k,\ell}=S^{-}(q,H,d)$;  
\item[(iii)] for every $1\leq k\leq q-1$ and every edge $g\in E(H_k-e)$ join $e_k$ and $g$ by a positive signal sender $S_{k,g}=S^{+}(q,H,d)$.  
\end{enumerate}
Note that the existence of the signal senders in (i)-(iii) is given by Lemma~\ref{lem:signal}.
Call the resulting graph $G_2$; an illustration can be found in Figure~\ref{fig:Case2} for the case that $q=4$.
\begin{figure}
	\begin{center}
	\includegraphics[scale=0.75,page=1]{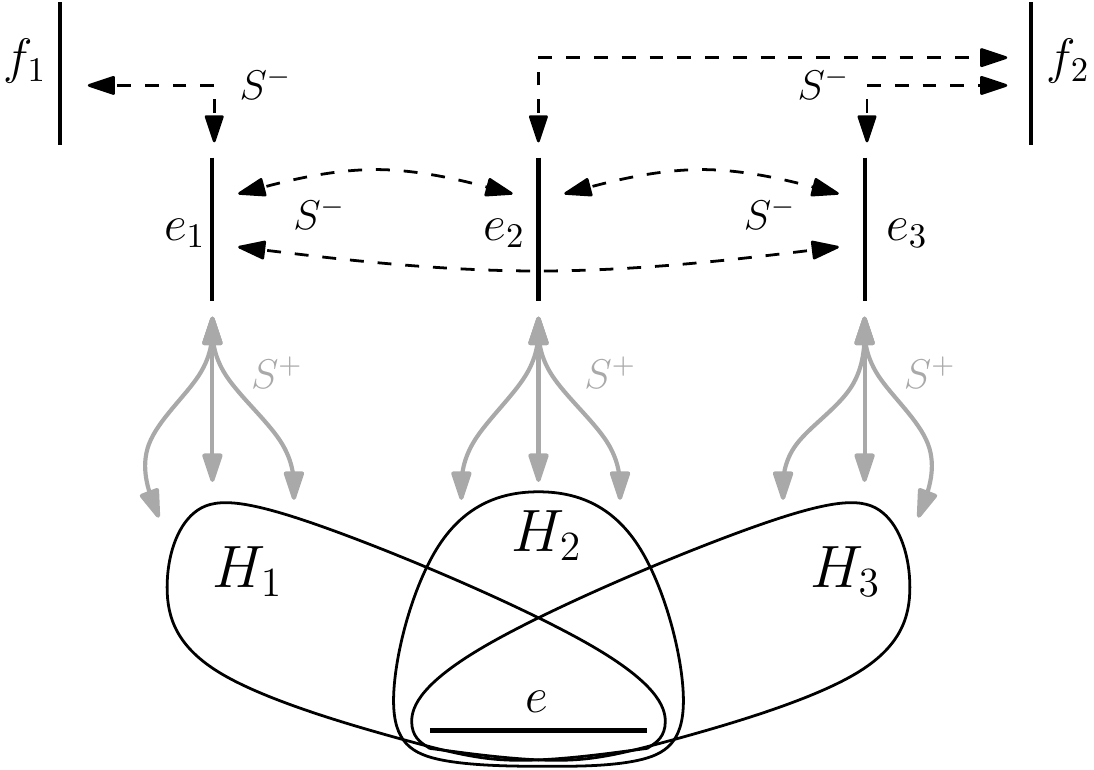}
	\end{center}
	\caption{Indicator for $q=4$ and $F=\{f_1,f_2\}$ being a matching.}
	\label{fig:Case2}
\end{figure}
It should be clear that $\dist_{G_2}(e,\tilde F)\geq d$ and $\dist_{G_2}(f_1,f_2)\geq d$. Thus, it remains to prove that $G_2$ satisfies Properties~$(I1)$, $(I2)$, $(I3')$ and Property \T. Without loss of generality we may assume that~$i=q$.

In the light of these properties, we first observe that every copy of $H$ in $G_2$ 
either is one of the subgraphs $H_k$ with $k\in [q-1]$ 
or is contained completely in one of the signal senders from (i)-(iii). 
Indeed, let a copy $H'$ of $H$ be given and assume first that $H'$ contains at least 
one vertex $v$ from a signal sender $S$ such that $v$ is not incident with one of the signal edges of $S$. 
Due to the fact that $H$ is 3-connected or $H=K_3$ and the fact that signal edges always 
have distance at least $d>v(H)$, it must hold that $H\subseteq S$. 
Assume then that $H'$ does not contain such a vertex. 
Then $H'$ must be contained in the union of all $H_k$ with $k\in [q-1]$.
As these subgraphs all intersect only in the edge $e$
and since $H$ is 3-connected or $H\cong K_3$, we must have $V(H')=V(H_k)$ for some $k\in [q-1]$.

For Property~$(I1)$, define a $q$-colouring of $G_2$ as follows. 
Colour the edges of $\tilde F$ and $e$ with colour $q$, and for every $k\in [q-1]$ colour the edges of $H_k-e$ and $e_k$ with colour $k$. Moreover, colour every signal sender from (i)-(iii) 
with an $H$-free $q$-colouring preserving the colours already chosen
for the signal edges. Note that this is possible by Properties~$(S1)$ and $(S2)$, 
because the signal senders may only intersect in their signal edges and the colours above 
have been chosen in such a way that the signal edges of negative/positive signal senders 
receive different/identical colours. The resulting $q$-colouring of $G_2$ is
$H$-free as it is $H$-free on every signal sender and on every subgraph $H_k$ with $k\in [q-1]$. 

For Property~$(I2)$, let $c:E(G_2)\rightarrow [q]$ be an $H$-free $q$-colouring of $G_2$ 
such that $\tilde F$ is monochromatic of colour $q$. 
Then $c(e_1)\neq c(f_1)=q$ and $c(e_k)\neq c(f_2)=q$ for every $k\in [q-1]$, by Property~$(S2)$ for the negative signal senders in (i). 
Similarly, by Property~$(S2)$ for the negative signal senders in (ii) we obtain that 
$c(e_k)\neq c(e_{\ell})$ for every $1\leq k<\ell \leq q-1$. 
Therefore, it must hold that $\{c(e_k):k\in [q-1]\}=[q-1]$. 
Applying $(S2)$ for the positive signal senders in (iii) we finally deduce that 
$H_k-e$ must be monochromatic in colour $c(e_k)$. 
Therefore, in order to prevent any copy $H_k$ of $H$ from becoming monochromatic 
we must have $c(e)\notin [q-1]$, i.e.~$c(e)=q$.

For Property~$(I3')$, let $f=f_{\ell}$, $\ell \in [2]$ be one of the two edges of $\tilde F$. We define a colouring $c:E(G_2)\rightarrow [q]$ as follows.
Set $c(f_{\ell})=c(e_{3-\ell})=q$, $c(e)=c(f_{3-\ell})=j$
and colour the edges $e_{\ell},e_3,e_4,\ldots,e_{q-1}$ 
with distinct colours from $[q-1]\setminus \{j\}$.
Colour the edges of $H_k-e$ with colour $c(e_k)$ for every $k\in [q-1]$. Finally, colour every signal sender from (i)-(iii) with an $H$-free colouring 
preserving the colours already chosen for the signal edges. 
Analogously to the verification of Property~$(I1)$ this is possible 
and it results in an $H$-free $q$-colouring of~$G_2$. Property~$(I3')$ follows.

For Property \T\ note that the choice $T_{f_1}=S_1$ and $T_{f_2}= G_2[V(G_2)\setminus V(S_1-e_1)]$ satisfies (T1)-(T3). 
\end{proof}

\begin{proof}[Proof of Lemma~\ref{lem:indicator}]
Without loss of generality we may assume that $d>v(H)$. We proceed by induction on $e(F)$. 

If $e(F)=1$ then let $G=S^+(q,H,d)$ be a positive signal sender, which exists by \thref{lem:signal}, and identify its signal edges with $e$ and $f$, where $f$ is the unique edge of $F$. Then Properties~$(I1)$ and $(I2)$ hold by Properties~$(S1)$ and $(S2)$ for positive signal senders. Property~$(I3)$ follows since $F-f$ has no edges, and by $(S1)$ again, after possibly swapping colours.
Property \T\ holds with $T_f=G$. 

Suppose now that $e(F)\ge2$. 
We construct $G$ as follows. Start with a copy $\tilde F$ of $F$ and an edge $e$ that is vertex disjoint from $\tilde F$. Let $e_1$ be an edge that is vertex-disjoint from $e$ and $\tilde F$, and let $f_1,f_2,\ldots,f_{e(F)}$ be
the edges of $F$ in any order. 
For clarity of presentation, we assume that the edges of $\tilde F$ are labelled  $f_1,f_2,\ldots,f_{e(F)}$ as well. 
Let $G_1$ be an $(H, F-f_1,e_1,q,d)$-indicator that \tentacle\ as given by induction, and let $G_2$ be an $(H,\{f_1,e_1\},e,q,d)$-indicator that \tentacle\ as given by \thref{Case2}. Now join $\tilde F-f_1$ and $e_1$ by $G_1$ and join $f_1$ and $e_1$ by $G_2$. 
An illustration can be found in Figure~\ref{figCase3}.
\begin{figure}[thb]
\begin{center}
\includegraphics[scale=0.7,page=2]{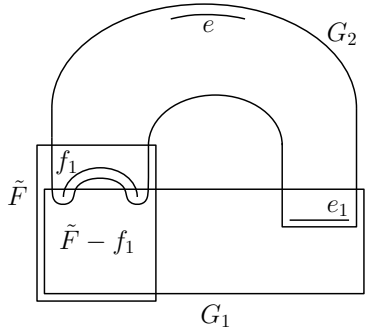}
\end{center}
\caption{Recursive construction of indicators.}
\label{figCase3}
\end{figure}

First observe that $\dist_G(e,\tilde F)\geq \min\{\dist_{G_2}(e,f_1),\dist_{G_2}(e,e_1)\}\ge d$ and $\dist_G(e_1,f_1)\geq d$. Furthermore, every copy of $H$ in $G$ must be either a subgraph of $G_1$ or of $G_2$. 
To see this, let $H'$ be a copy of $H$ in $G$. 
Assume first that $H'$ contains a vertex from $V(G_2)\setminus (e_1\cup f_1)$. Since $\dist_{G_2}(e_1,f_1)\geq d > v(H)$, $H'$ cannot use vertices from both $e_1$ and $f_1$, and thus, 
we conclude that $H'\subseteq G_2$ since either $H\cong K_3$ or $H$ is 3-connected. 
Assume then that $H'$ does not use vertices from $V(G_2)\setminus (e_1\cup f_1)$. 
This implies that $V(H')\se V(G_1)$ so we are done unless $f_1$ is an edge of $H'$ (note that by definition, the vertices of $f_1$ are vertices of $\tilde F-f_1$ and hence of $G_1$). 
Assume towards a contradiction that $f_1$ is an edge of $H'$. 
Furthermore, we may assume that $H'$ contains a vertex 
$v\in V(G_1)\setminus (V(\tilde F)\cup e_1)$ since $\tilde F$ does not contain a copy of $H$. 
Let $T_{f_2},\ldots,T_{f_{e(F)}}$ be the subgraphs of $G_1$ 
given by Property \T. Then $v\in V(T_g)$ for some $g\in E(F-f_1)$, by (T2). 
Furthermore, $g$ is unique since otherwise 
$\dist_{G}(v,f_1)\geq \min\{\dist_{G_1}(v,\tilde F),\dist_{G_2}(e_1,f_1)\}\geq d>v(H)$ by (T3), 
a contradiction since $H$ is connected. 
In fact, this shows that no vertex of $V(H')\setminus V(\tilde F)$ is contained in the intersection $V(T_{g_1})\cap V(T_{g_2})$ for distinct $g_1,g_2\in E(\tilde F-f_1)$. 
Now, $g$ cannot be incident with both endpoints of $f_1$, and $V(T_g)\cap V(\tilde F) = g$ by (T1). When $H\cong K_3$ this already implies that $v$ together with the vertices of $f_1$ cannot form a copy of $H$. 
When $H$ is 3-connected, we then find a vertex $w\notin V(T_g)$ 
which is incident with $f_1$ and
three internally vertex-disjoint $v$-$w$-paths in~$H'$.
Among these paths there is at least one path 
that does not contain a vertex from $g$, call this path $P$.
Since $v\in V(T_g)$ and $w\notin V(T_g)$
there must be an edge $e=xy$ on $P$ such that
$x\in V(T_g)$ and $y\notin V(T_g)$. But now, $x\notin g = V(T_g)\cap V(\tilde F)$, and thus we conclude that $x\notin V(\tilde F)$
and $e=xy \notin E(\tilde F)$. Using (T2) and $y\notin V(T_g)$
it follows that $e\in E(T_{g'})$ for some $g'\in E(\tilde F - f_1 - g)$. But then $x$ is a vertex in $V(H')\setminus V(\tilde F)$
which is contained in the intersection $V(T_g)\cap V(T_{g'})$
for distinct $g,g'\in E(\tilde F - f_1)$. We already explained that such a vertex does not exist, a contradiction.

For Property~$(I1)$, let $c_1$ be an $H$-free $q$-colouring of $G_1$with $\tilde F-f_1$ and $e_1$ having colour $q$, as provided by Properties~$(I1)$ and $(I2)$ for $G_1$. Analogously, let $c_2$ be an $H$-free $q$-colouring of $G_2$ with $\{e_1,f_1\}$ and $e$ having colour $q$. The combination of both colourings together is an $H$-free $q$-colouring $c$ of $G$, as every copy of $H$ is contained either in $G_1$ or in $G_2$. Moreover, $\tilde F$ is monochromatic in colour $q$, as claimed.

For Property~$(I2)$, let $c$ be an $H$-free $q$-colouring of $G$ such that $\tilde F$ is monochromatic of colour $q$. Then $c(e_1)=q$ by Property~$(I2)$ of  $G_1$. But then $\{e_1,f_1\}$ is monochromatic in colour $q$ which implies that 
$c(e)=q$ by Property~$(I2)$ of $G_2$. 

For Property~$(I3)$, let $f\in E(\tilde F)$. Assume first that $f=f_1$. 
As in $(I1)$ there exists an $H$-free $q$-colouring $c_1$ of $G_1$ 
with $\tilde F-f_1$ and $e_1$ having colour $q$. Moreover, 
using Property~$(I3)$ of $G_2$ we know that there is an $H$-free $q$-colouring $c_2$ 
of $G_2-f_1$ such that $c_2(e_1)=q$ and $c_2(e)=j$. 
The combination of both colourings is a $q$-colouring as desired, 
since every copy of $H$ is contained either in $G_1$ or in $G_2$. 
Now, assume that $f\neq f_1$. By Property~$(I3)$ of $G_1$ 
there is an $H$-free $q$-colouring $c_1$ of $G_1-f$ such that
$\tilde F-\{f_1,f\}$ is monochromatic in colour $q$ and with $e_1$ having colour $j$. 
By Property~$(I3')$ of $G_2$ there is an $H$-free $q$-colouring of $G_2$
such that $c_2(f_1)=q$ and $c_2(e)=c_2(e_1)=j$. 
The combination of both colourings is a $q$-colouring as desired for Property~$(I3)$.

For Property~\T, let $T_{f_2},\ldots,T_{f_{e(F)}}$ be the subgraphs for $f_2,\ldots,f_{e(F)}$ 
given by Property~\T\ of $G_1\subseteq G$.
Moreover, set $T_{f_1}=G_2$. Then (T1) 
holds for $G$, since (T1) holds for $G_1$
by induction and since $V(G_2)\cap V(\tilde F)=f_1$ and $f_1\in E(G_2)$.
Property~(T2) is given for $G$, 
since $V(G)=V(G_1)\cup V(G_2)=\bigcup_{f\in \tilde F-f_1}V(T_f)\cup V(T_{f_1})$ by Property~(T2) for $G_1$; and since $E(G) = E(G_1)\cup E(G_2) = \bigcup_{g\in E(\tilde F)}E(T_g)$.
For (T3), let $v\in V(T_{f_i})\cap V(T_{f_j})$
for some $i\neq j$ where $v\notin V(\tilde F)$.
If $i=1$ or $j=1$, then 
$v\in e_1$ and thus $\dist_G(v,\tilde F)\geq d$.
Otherwise, by (T3) for $G_1$ and 
the construction of $G$, we conclude that
$
\dist_G(v,\tilde F)\geq \min\{\dist_{G_1}(v,\tilde F-f_1), \dist_{G_2}(e_1,f_1)\}\geq d
$.
\end{proof}

%
%
\begin{proof}[Proof of Theorem \ref{thm:containment}.]
Let $H$, $q$, and $F$ be as in the theorem statement. By assumption, there exists an $H$-free $q$-colouring of $F$. Let $F_1,\ldots,F_q$ denote its colour classes. 
We construct a graph $G$ as follows. 
Let $\{r_1,\ldots,r_q,e_1,\ldots,e_q,f_1,\ldots,f_q\}$ be a matching that 
is vertex-disjoint from $F =F_1\cup\ldots\cup F_q$. 
Now join these matching edges and the edges of $F$ by signal senders and indicators as follows. Set $d=v(H)+1$. 
\begin{enumerate}
\item[(i)] For every $1\leq k<\ell\leq q$ join $r_k$ and $r_{\ell}$
	by a negative signal sender $S_{k,\ell} =S^{-}(q,H,d)$; 
\item[(ii)] for every $k\in [q]$ and every $g\in F_k$ join
	$r_k$ and $g$ by a positive signal sender 
	$S_{k,g}=S^{+}(q,H,d)$;  
\item[(iii)] for every $k\in [q]$ join $F_k$ and $e_k$ 
	by an $(H,F_k,e_k,q,d)$-indicator $I_k$
	that \tentacle ;  
\item[(iv)] for every $k\in [q]$ join $e_k$ and $f_k$ by a negative signal sender 
	$S_{k}^-=S^{-}(q,H,d)$; 
\item[(v)] for every $k\in [q-1]$ join $f_k$ and $f_{k+1}$ by a  positive signal sender 
	$S_{k}^+=S^{+}(q,H,d)$. 
\end{enumerate}
The existence of the signal senders and indicators in (i)-(v) follows from \thref{lem:signal,lem:indicator}. 
An illustration of the construction can be found in Figure~\ref{fig:containment}.
\begin{figure}
	\begin{center}
	\includegraphics[scale=0.75,page=4]{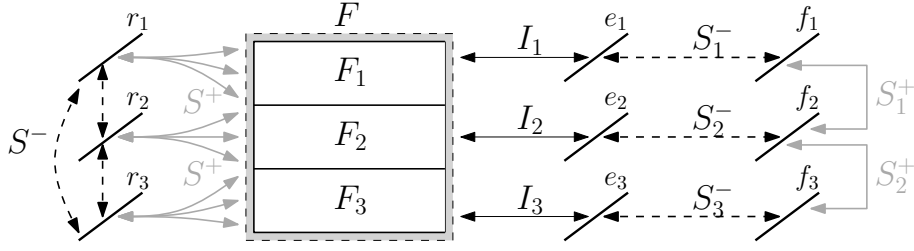}
	\end{center}
	\caption{Construction of $G$ for $q=3$.}
	\label{fig:containment}
\end{figure}

Similar to the constructions for Lemma~\ref{lem:indicator}, 
we first show that every copy of $H$ in $G$ is a subgraph of either $F$ or one of these signal senders or one of these indicators. Let $H'$ be a copy of $H$ in $G$. 
Assume first that there is a signal sender $S$ from (i), (ii), (iv) or (v), and a vertex $v\in V(H')\cap V(S)$ that is not incident to any of the signal edges of $S$. Then $H'\se S$, since the signal edges have distance at least $d$ in $S$ and since $H$ is $3$-connected or a triangle. 
So we may assume that $V(H')\se V(F)\cup \bigcup_{k\in[q]}V(I_k)$. 
If $V(H')\se V(F)$ then we are done. Thus, we may also assume that $H'$ contains a vertex $v$ from $V(I_k)\setminus V(F)$ for some $k\in [q]$. Let $\{T_g \mid g\in E(F_k)\}$ be the collection of subgraphs of $I_k$ 
given by Property~\T\ of $I_k$.
By (T2)
we know that $v\in V(T_g)\setminus V(F)$ for some $g\in E(F_k)$.
Moreover, the only edges of $G$ containing $v$ are contained in $I_k$ or in $S_k^-$, by construction. 
When $H'\cong K_3$ we immediately can deduce that
$V(H')\se V(I_k)$, since we already assumed
that $V(H')\cap (V(S_k^-)\setminus e_k) = \emptyset$.
Let $V(H')=\{v,x,y\}$ in this case. By Property~(T2)
we have $vx\in E(T_{g'})$ for some $g'\in E(F_k)$.
If $g\neq g'$ holds, then $v\in V(T_g)\cap V(T_{g'})$ and therefore
$\dist_{I_k}(v,F)\geq d = 4$ by Property~(T3),
which yields $H'\subseteq I_k$.
Hence, we may assume that $g=g'$ and $vx\in E(T_g)$,  
analogously we may assume that $vy\in E(T_g)$. 
If $xy\in E(I_k)$, then we are done.
So, we may also assume that $xy\in E(F)$.
Then by Property~(T1) we get 
$\{x,y\}\subseteq V(T_g)\cap V(F) = g$
and thus $xy=g\in E(T_g)$, again implying that $H'\subseteq I_k$.
Consider next the case when $H'$ is $3$-connected.
We already know that we may assume that 
$V(H')\se V(F)\cup \bigcup_{k\in[q]}V(I_k)$
and that there is a vertex $v$ from $V(I_k)\setminus V(F)$ 
for some $k\in [q]$. Again, by (T2) there is an edge $g\in E(F_k)$
with $v\in V(T_g)\setminus V(F)$. If $H'\subseteq T_g$ then we are done.
So, we may assume that $H'$ contains an edge
which does not belong to $E(T_g)$. By the 3-connectivity
of $H'$ we then find a $v$-$y$-path $P$ in $H'$ 
which does not use vertices from $g$ and such that
the edge $xy\in E(P')$ which is incident with the endpoint $y$ does not belong to $E(T_g)$. Let $P$ be a shortest such path. Then
$x\in V(T_g)$. Since $P$ does not use vertices of $g$
and since $V(T_g)\cap V(F)=g$ by (T1),
we have $x\notin V(F)$. In particular,
the only neighbours of $x$ in $V(F)$ need to belong to $g$,
and hence $y\notin V(F)$. It follows that $y\in V(I_k)\setminus V(F)$ and $xy\in E(I_k)$, since all the indicators intersect in $V(F)$ only. Hence, using (T1) and since $xy\notin E(T_g)$ by the choice of $P$, we find an edge $g'\in E(F_k-g)$ such that $xy\in E(T_{g'})$. But then $x\in V(T_g)\setminus V(F)$ is a vertex that lies in the intersection
$V(T_g)\cap V(T_{g'})$ for distinct $g,g'\in E(F_k)$.
Applying (T3) leads to $\dist_{I_k}(x,F)\geq d$
and hence $H'\subseteq I_k$.

We now prove that $(G-f)\nto (H)_q$ for every $f\in E(F)$. 
Without loss of generality let $f\in F_q$. 
We define a colouring $c:E(G-f)\to [q]$ as follows. Colour all edges of $F_q-f$ and $r_q$ with colour $q$, and for every $k\in [q-1]$ colour the edges of $F_k+\{e_k,r_k\}$ with colour $k$. Set $c(e_q)=1$ and $c(f_k)=q$ for every $k\in [q]$. Finally, colour every indicator from (iii) and every signal sender from (i), (ii), (iv) and (v) with an $H$-free $q$-colouring preserving the colours already chosen. For $I_q$ this is possible by Property~$(I3)$ and for all other indicators this is possible by Properties~$(I1)$ and $(I2)$. For the signal senders this is possible by Properties~$(S1)$ and $(S2)$, as the colours above have been chosen in such a way that the signal edges of negative/positive signal senders receive different/identical colours. 
We claim that $c$ is $H$-free. Indeed, any copy of $H$ is contained as a subgraph either in $F$ or in one of the indicators or signal senders as we have shown above. 
The colouring on each indicator and signal sender is $H$-free, and it is $H$-free on $F$ since each of $F_1,\ldots F_q$ receives a distinct colour, and each $F_i$ is $H$-free by assumption.

We next show that $G\to(H)_q$.  
Assume that there exists an $H$-free $q$-colouring $c$.
By Property~$(S2)$ of the negative signal senders
in (i), we find that $c(r_k)\neq c(r_{\ell})$ for all $k,\ell\in[q]$ with $k\neq \ell$. 
Without loss of generality let $c(r_k)=k$ for all $k\in [q]$.
By Property~$(S2)$ of the positive signal senders in (ii) it then follows
that $F_k$ needs to be monochromatic in colour $k$ for every $k\in [q]$.
Using Property~$(I2)$ of the indicators in (iii) we conclude that 
$c(e_k)=k$ must hold for every $k\in [q]$,
and applying Property~$(S2)$ of the negative signal senders in (iv) 
we then deduce $c(f_k)\neq k$ for every $k\in [q]$.
But then, using Property~$(S2)$ of the positive signal senders in (v), 
we obtain $c(f_1)=c(f_k)\neq k$ for every $k\in [q]$, a contradiction.

Finally, let $G'\subset G$ be a subgraph of $G$ that is $q$-Ramsey-minimal for $H$. 
Then $f\in E(G')$ for every $f\in E(F)$ since $(G-f)\nto(H)_q.$ Thus, $G'$ is a $q$-Ramsey-minimal graph for $H$ which contains $F$ as an induced subgraph. 

In order to obtain infinitely many such $q$-Ramsey-minimal graphs set $G_0=G'$ and obtain further such $q$-Ramsey-minimal graphs $G_i$ iteratively as follows. Let $F_i$ be the disjoint union of $v(G_{i-1})$ copies of $F$. Since $F_i$ is not $q$-Ramsey for $H$, we can repeat the above argument and thus create a 
$q$-Ramsey-minimal graph $G_i$ for $H$ which contains $F_i$ as an induced subgraph. Note that then $G_i$ also contains $F$ as an induced subgraph and $v(G_i)\geq v(F_i)>v(G_{i-1})$ holds. 
\end{proof}

\begin{proof}[Proof of \thref{wow}]
Suppose that for two graphs $H$ and $H'$ we have 
$\mathcal{M}_q(H)\se\mathcal{M}_q(H')$ and $\mathcal{M}_q(H')\nse\mathcal{M}_q(H)$. 
Let $G \in \mathcal{M}_q(H')\sm\mathcal{M}_q(H).$ 
If $G$ is $q$-Ramsey for $H$ then for some subgraph $G'$ of $G$ we have that $G'\in \cM_q(H)\se\cM_q(H')$ by assumption. 
If $G'=G$ this contradicts $G\not\in \cM_q(H)$, and if $G'$ is a proper subgraph of $G$ then this contradicts $G\in \cM_q(H')$ as $G$ is not minimal then. 
On the other hand, if $G$ is not $q$-Ramsey for $H$ then there exists a graph $G'$ such that $G\se G'\in\cM_q(H)\se\cM_q(H')$, by \thref{thm:containment} and assumption. Since $G\in\cM_q(H')$ by assumption it follows that $G=G'$, a contradiction to $G\not\in\cM_q(H)$. 
\end{proof}

\section{Ramsey equivalence results}\label{section4}

In this section we prove \thref{thm:3equiv,GeneralLowerBdd,thm:nonequiv}. 
We start with the proof of \thref{GeneralLowerBdd} which is a 
corollary of the following slightly more general statement. This multi-colour version is a straight-forward generalisation of the argument for 2 colours in \cite[Theorem 3.1]{szz2010}. \thref{GeneralLowerBdd}
follows by repeatedly applying this theorem to pairs $H_{i-1} = K_k+(i-2)K_t$ and $H_{i}=K_k+(i-1)K_t$ with $2\leq i\leq s +1$. 
\begin{theorem}\thlab{Thm3Pt1FromTibor}
Let $q\ge 2$, let $a_1\ge a_2\ge\ldots \ge a_s\ge 1$ and define $H_i:=K_{a_1}+\cdots + K_{a_i}$ for $1\le i \le s$. If 
$R_q(a_1-a_s+1,a_1,\ldots,a_1) >q(a_1+\ldots+a_{s-1}),$ then $H_s$ and $H_{s-1}$ are $q$-equivalent. 
\end{theorem}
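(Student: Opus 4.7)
To show $\cM_q(H_s)=\cM_q(H_{s-1})$ it suffices to verify, for every graph $G$, the equivalence $G\to(H_s)_q \Leftrightarrow G\to(H_{s-1})_q$. Since $H_{s-1}\se H_s$, the forward implication is automatic. For the converse we argue the contrapositive: given an $H_s$-free $q$-colouring $c:E(G)\to[q]$, we will construct an $H_{s-1}$-free $q$-colouring $c'$ of~$G$.

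The construction rests on the following structural input. Write $G_i$ for the colour-$i$ subgraph of $c$, and for each $i\in[q]$ fix a copy of $H_{s-1}$ in $G_i$ with vertex set $U_i$ if one exists (so $|U_i|\leq a_1+\cdots+a_{s-1}$); otherwise set $U_i=\emptyset$. We first establish the claim that whenever $U_i\neq\emptyset$, every copy of $K_{a_1}$ in $G_i$ meets $U_i$ in at least $a_1-a_s+1$ vertices. Otherwise $\geq a_s$ of its vertices would lie outside $U_i$ and form a $K_{a_s}$ in $G_i$ vertex-disjoint from $U_i\cong H_{s-1}$; attaching this $K_{a_s}$ to $U_i$ would produce an $H_s$ inside $G_i$, contradicting the $H_s$-freeness of $c$.

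Set $U=\bigcup_i U_i$, so that by the hypothesis $|U|\leq q(a_1+\cdots+a_{s-1})<R_q(a_1-a_s+1,a_1,\ldots,a_1)$. By definition of the Ramsey number, there is a $q$-colouring $\tilde c$ of $K_U$ such that, for some distinguished colour $\alpha\in[q]$, no monochromatic $K_{a_1-a_s+1}$ occurs in colour $\alpha$ and no monochromatic $K_{a_1}$ occurs in any other colour. Assuming the set $\{i : U_i\neq\emptyset\}$ is non-empty (else $c$ is already $H_{s-1}$-free and we are done), we choose $\alpha$ from this set, and define $c'$ to agree with $\tilde c$ on the edges of $G[U]$ and with $c$ on all remaining edges of~$G$. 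For colour $\alpha$ the verification is clean: the $K_{a_1}$-part $B_1$ of any putative monochromatic $H_{s-1}$ in $c'$-colour $\alpha$ satisfies $|B_1\cap U|\leq a_1-a_s$ by the Ramsey property of $\tilde c$, so $|B_1\setminus U|\geq a_s$; since $c'$ and $c$ agree on edges not lying inside $U$, these $\geq a_s$ vertices form a $K_{a_s}$ in $G_\alpha$ disjoint from $U_\alpha$, which combined with $U_\alpha\cong H_{s-1}$ yields an $H_s$ in $G_\alpha$, contradicting the $H_s$-freeness of~$c$.

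The main obstacle, and the technical heart of the proof, is handling the remaining colours $i\neq\alpha$: here $\tilde c$ only forbids $K_{a_1}$ in colour $i$, so one obtains merely $|B_1\setminus U|\geq 1$, which is too weak to produce a $K_{a_s}$ outside $U_i$ by the same argument. We plan to resolve this by iterating the construction: at each step we identify the colours still hosting a monochromatic $H_{s-1}$, pick a new distinguished colour from this set, recompute the relevant vertex sets $U_i$, and re-colour inside the updated $U$ via a fresh Ramsey colouring. The key technical point will be to verify that these successive re-colourings are compatible---that the structural observation above continues to apply to each evolving colour class, and that re-colouring does not reintroduce a monochromatic $H_{s-1}$ in any previously-treated colour. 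Since each iteration strictly shrinks the set of colours that host a monochromatic $H_{s-1}$ and this set has at most $q$ elements, the procedure will terminate after at most $q$ rounds with the desired $H_{s-1}$-free colouring~$c'$.
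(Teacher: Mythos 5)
Your overall framework is the right one and matches the paper's: argue the contrapositive, take the union $U$ of suitably chosen monochromatic witness sets, note $|U|<R_q(a_1-a_s+1,a_1,\ldots,a_1)$, and recolour $G[U]$ with a Ramsey colouring that avoids $K_{a_1-a_s+1}$ in one distinguished colour and $K_{a_1}$ in the others. Your verification for the distinguished colour $\alpha$ is also essentially the paper's argument. But the part you defer -- the colours $i\neq\alpha$ -- is precisely where the content of the proof lies, and the iteration you sketch does not close the gap. Two concrete obstacles: (a) each round's structural claim (``a $K_{a_s}$ in colour $i$ disjoint from $U_i$ yields a monochromatic $H_s$'') is derived from the $H_s$-freeness of the \emph{current} colouring, but after the first recolouring the new colouring need not be $H_s$-free, so the claim is unavailable in round two; (b) the assertion that the set of colours hosting a monochromatic $H_{s-1}$ strictly shrinks is unsupported -- recolouring inside the new $U$ can both reintroduce a monochromatic $H_{s-1}$ in the previously treated colour and create one in a colour that had none (by combining recoloured edges inside $U$ with untouched colour-$i$ edges outside), so termination is not established.

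The paper avoids iteration entirely by choosing the witness sets more carefully, in a nested and \emph{maximal} way: take $S_1$ to be a copy of $H_{s-1}$ in colour $1$, and then for $j=2,\ldots,q$ let $S_j$ span a monochromatic copy of $H_{i_j}$ in colour $j$ inside $V(G)\setminus(S_1\cup\cdots\cup S_{j-1})$ with $i_j$ as large as possible. Maximality of $i_j$ (together with $a_{i_j+1}\le a_1$) forces the original colouring restricted to $V(G)\setminus(S_1\cup\cdots\cup S_q)$ to contain no monochromatic $K_{a_1}$ in \emph{any} colour -- this is the single fact that handles all colours $i\neq\alpha$ at once. The second ingredient you are missing is that the edges between $S_1\cup\cdots\cup S_q$ and its complement must all be recoloured with the distinguished colour $1$; in your construction these edges keep their original colours, which is exactly why a monochromatic $K_{a_1}$ in colour $i\neq\alpha$ straddling the boundary cannot be excluded. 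With both modifications, one recolouring suffices and no iteration is needed.
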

\begin{proof}
It is clear that every graph $G$ that is $q$-Ramsey for $H_{s}$ is also $q$-Ramsey for $H_{s-1}$. Now let $G$ be a graph that is $q$-Ramsey for $H_{s-1}$. We need to show that $G$ is $q$-Ramsey for $H_s$. 
Suppose for a contradiction that $G\not\to (H_s)_q$ and let $c:E(G)\to[q]$ be a $q$-colouring of the edges of $G$ without a monochromatic copy of $H_s$. Without loss of generality, we may assume that there is a copy of $H_{s-1}$ in colour 1, and let $S_1$ be its vertex set. 
Since $c$ has no copy of $H_s$ in colour 1 the colouring restricted to $V(G)\sm S_1$ has no copy of $K_{a_s}$ in colour 1. 
Now, recursively for every colour $j=2,\ldots,q$, let $i_j$ be the largest index such that 
$V(G)\sm (S_1\cup\ldots\cup S_{j-1})$ contains a monochromatic copy of $H_{i_j}$ in colour $j$ (where we take $H_0$ to be the empty graph), and let $S_j$ be its vertex set. 
Since $c$ has no monochromatic copy of $H_s$ we have that $i_j<s$ for all $j\in[q]$. 
Now $c$ restricted to $V(G)\sm (S_1\cup\ldots\cup S_{q})$ does not contain a monochromatic copy of $K_{a_1}$,
since by the maximality of $i_j$ there is no copy of
$H_{i_j+1}= H_{i_j} + K_{a_{i_j+1}}$ in colour $j$
in $V(G)\setminus (S_1\cup\ldots\cup S_{j-1})$ 
and since $a_{i_j+1}\leq a_1$ for every $j\in [q]$.

As in the proof of Theorem 3.1 in \cite{szz2010} we now recolour some edges of $G$. 
We have that 
\begin{align*}
|S_1\cup\ldots S_q|=|V(H_{s-1})|+|V(H_{i_2})|+\ldots+|V(H_{i_q})|
	& \le q(a_1+\ldots+a_{s-1})\\
	& < R_q(a_1-a_s+1,a_1,\ldots,a_1).
\end{align*}
Hence, by the definition of the Ramsey number we can recolour the edges inside $S_1\cup\ldots\cup S_q$ without a monochromatic copy of $K_{a_1-a_s+1}$ in colour 1 and without a monochromatic copy of $K_{a_1}$ in colour $j$, for all $2\le j\le q$.  
All edges between $S_1\cup\ldots\cup S_q$ and $V\sm (S_1\cup\ldots\cup S_q)$ receive colour 1, and all remaining edges retain their original colour. 
It is now easy to see that there is no monochromatic copy of $K_{a_1}$ which is a contradiction to $G\to(H_{s-1})_q$. 
\end{proof}

It turns out that \thref{GeneralLowerBdd} already implies \thref{thm:3equiv} for $q\ge 4$. We need two more ingredients for the case~$q=3$.

\begin{observation}\thlab{obsii}
Let $G$ be a graph such that $G\to (K_3)_3$, and let $c$ be a 3-colouring of the edges of $G$. If there is a monochromatic copy of $K_3$ in every colour, then there is a monochromatic copy of $K_3+K_2$. 
\end{observation}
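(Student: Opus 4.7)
My plan is to prove the observation by contradiction: assuming that $c$ admits no monochromatic copy of $K_3+K_2$, I will construct a $3$-colouring of $G$ without a monochromatic triangle, contradicting $G\to (K_3)_3$. Fix monochromatic triangles $T_1,T_2,T_3$ in $c$, one per colour, with $V_i:=V(T_i)$. Any colour-$i$ edge disjoint from $V_i$ would, together with $T_i$, form a monochromatic $K_3+K_2$ in colour $i$, so every colour-$i$ edge is incident to $V_i$. Hence $U:=V_1\cup V_2\cup V_3$ is a vertex cover of $G$ with $|U|\le 9$; in particular $W:=V(G)\sm U$ is independent in $G$, every triangle of $G$ has at least two vertices in $U$, and $|V_i\cap V_j|\le 1$ for $i\ne j$ (else a shared edge would need two colours simultaneously).

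The heart of the argument is to find a suitable $3$-edge-colouring $\chi$ of the complete graph on $U$. Since $|U|\le 9<17=R(3,3,3)$, such a $\chi$ with no monochromatic triangle exists; I strengthen this to a $\chi$ with the additional property that for every $i\in\{1,2,3\}$ and every edge $e$ of $T_i$, $\chi(e)\ne i$. Because $|V_i\cap V_j|\le 1$, the nine edges of $T_1,T_2,T_3$ are pairwise distinct and each has a single forbidden colour. Within each $V_i$ I colour the three edges by a $2$--$1$ split of the two allowed colours (to avoid a monochromatic triangle on $V_i$), and extend to the remaining edges of $K_U$ by a direct construction exploiting $|U|\le 9$ and a case analysis on the overlap pattern of $V_1,V_2,V_3$.

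Given such a $\chi$, I define $c':E(G)\to[3]$ by $c'(e):=\chi(e)$ for $e\in E(G[U])$, and $c'(uw):=i(u)$ for cross edges $uw$ with $u\in U$, $w\in W$, where $i(u)\in\{i : u\in V_i\}$ is any fixed choice. A triangle contained in $G[U]$ is non-monochromatic by choice of $\chi$. A triangle $\{u_1,u_2,w\}$ with $w\in W$ monochromatic in colour $k$ would require $i(u_1)=i(u_2)=k$, hence $u_1,u_2\in V_k$, together with $\chi(u_1u_2)=k$; this contradicts the additional property of $\chi$. Thus $c'$ has no monochromatic triangle, contradicting $G\to(K_3)_3$.

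The main obstacle is verifying the existence of $\chi$: combining the no-monochromatic-triangle condition with the forbidden-colour lists on the nine edges of $T_1,T_2,T_3$ requires a careful construction. I expect this to succeed thanks to the generous slack afforded by $|U|\le 9$, but the verification calls for a case analysis based on how $V_1,V_2,V_3$ overlap (pairwise disjoint, or some pairs sharing a single vertex).
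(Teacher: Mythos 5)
Your reduction to the set $U=V_1\cup V_2\cup V_3$ is correct and is essentially the same first step as the paper's: any colour-$i$ edge disjoint from $V_i$ completes a monochromatic $K_3+K_2$ with $T_i$, so (assuming no such copy) every edge of $G$ meets $U$ and $W=V(G)\sm U$ is independent. The problem is what comes next. The existence of the colouring $\chi$ of $K_U$ --- triangle-free \emph{and} avoiding colour $i$ on every edge inside $V_i$ --- is exactly the step you flag as "the main obstacle", and you never carry out the promised case analysis. As it stands this is a genuine gap: the statement of that lemma is plausible, but it is not a consequence of $|U|\le 9<R(3,3,3)$ alone (the Ramsey bound gives you \emph{some} triangle-free colouring, not one respecting nine prescribed forbidden-colour constraints), and verifying it honestly requires the overlap case analysis you defer. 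The proof is therefore incomplete at its crucial point.

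Moreover, the extra constraint on $\chi$ is an artifact of your choice to colour each cross edge $uw$ by a colour $i(u)$ with $u\in V_{i(u)}$; a different final step removes the difficulty entirely. Since $W$ is independent and $|U|\le 9$, you have $\chi(G)\le |U|+1\le 10<17=R_3(3)$; now take a proper vertex colouring of $G$ with at most $10$ classes, a triangle-free $3$-edge-colouring of $K_{10}$, and pull it back to $E(G)$ (colour $uv$ by the colour of the edge between the classes of $u$ and $v$; adjacent vertices lie in distinct classes, so a monochromatic triangle in $G$ would yield one in $K_{10}$). This contradicts $G\to(K_3)_3$ with no case analysis. This is in effect what the paper does, phrased positively: $G\to(K_3)_3$ forces $\chi(G)\ge R_3(3)=17$, hence $V(G)\sm V_0$ cannot be independent ($V_0$ being the at most nine triangle vertices), and any edge outside $V_0$ completes a monochromatic $K_3+K_2$ with the triangle of its own colour. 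I recommend you either finish your construction of $\chi$ in full, or replace the recolouring step by this pullback argument.
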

\begin{proof}
We first note that $\chi(G)\ge R_3(3) = 17$, where $\chi(G)$ is the chromatic number of $G$, see, e.g., Theorem 1 in \cite{l1972}. 
Let $V_0$ be the set of vertices belonging to the three monochromatic triangles, each of a different colour, which exist by assumption. 
Then $G[V(G)\backslash V_0]$ contains an edge as otherwise $\chi(G)\leq\chi (G[V_0])+1\leq 10$. This edge then forms  a monochromatic copy of $K_3+K_2$ along with one of the three monochromatic triangles.
\end{proof}

The next theorem was proved by Bodkin and Szab\'o (see \cite{b2015}, and \cite{bl2018} for a proof).
\begin{theorem}[Theorem 2 in \cite{bl2018}] \thlab{K6Distinguisher}
If $G\to (K_3)_2$ and $G\not\to (K_3+K_2)_2$ then $K_6\se G$. 
\end{theorem}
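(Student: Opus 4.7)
I prove the contrapositive: assuming $G\to(K_3)_2$ and $K_6\nse G$, I show $G\to(K_3+K_2)_2$. Fix an arbitrary $2$-edge-colouring $c$ of $G$, with colours red and blue. By the Ramsey property, $c$ contains a monochromatic triangle; without loss of generality there is a red triangle on $A=\{a_1,a_2,a_3\}$. If $c$ admits no red $K_3+K_2$ (otherwise we are done), then every red edge must touch $A$, and so $G[V\sm A]$ consists entirely of blue edges.

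First, I claim that $G[V\sm A]$ contains a triangle, which is then necessarily blue. Indeed, otherwise a careful recolouring of the edges incident to $A$ (splitting the triangle on $A$ non-monochromatically, and using the triangle-freeness of $G[V\sm A]$ to break the remaining triangle types passing through $A$) would produce a $2$-colouring of $G$ with no monochromatic triangle, contradicting $G\to(K_3)_2$. Let $B=\{b_1,b_2,b_3\}\se V\sm A$ host such a blue triangle. By the symmetric argument applied to blue, every blue edge of $G$ touches $B$. Consequently every edge of $G$ has an endpoint in $A\cup B$, so $V\sm(A\cup B)$ is an independent set in $G$.

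Next, I argue that all nine cross edges $a_ib_j$ with $1\le i,j\le 3$ lie in $E(G)$. Suppose for contradiction that some cross edge, say $a_1b_1$, is missing. I construct an explicit $K_3$-free $2$-colouring of $G$: swap the colours inside the two triangles (recolour $G[A]$ to blue and $G[B]$ to red), and then assign colours to the present cross edges $a_ib_j$ and to the remaining edges (from $A\cup B$ into the independent set $V\sm(A\cup B)$) so that every triangle across the structure becomes non-monochromatic. The absent edge $a_1b_1$ is precisely what allows the last potentially monochromatic triangle to be eliminated. This contradicts $G\to(K_3)_2$.

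Once all nine cross edges are present, the induced subgraph on $A\cup B$ contains the two triangles $G[A]$, $G[B]$ together with the complete bipartite graph between them; hence $G[A\cup B]\cong K_6$, contradicting $K_6\nse G$. The main obstacle is the recolouring argument establishing completeness between $A$ and $B$: one has to verify that the new colouring simultaneously defeats all triangle types on $A\cup B$ (namely $a_ia_jb_k$, $a_ib_jb_k$, and triangles of the forms $a_ia_jv$, $b_ib_jv$, $a_ib_jv$ for $v\in V\sm(A\cup B)$), with the missing edge $a_1b_1$ breaking the sole remaining monochromatic configuration. The earlier step producing a disjoint blue triangle is of similar flavour but technically simpler, because the triangle-freeness of $G[V\sm A]$ already rules out one whole type of triangle.
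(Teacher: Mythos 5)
Your argument breaks down at the very first claim, namely that if $G[V\sm A]$ is triangle-free then $G\nto(K_3)_2$ via ``a careful recolouring.'' This is false. Consider Graham's graph $G=K_8-C_5$: it is $K_6$-free, it satisfies $G\to(K_3)_2$, and deleting the triangle $A$ on the three vertices not touched by the removed $C_5$ leaves the complement of $C_5$ in $K_5$, which is again a $C_5$ and hence triangle-free. Concretely, colour the triangle on $A$ red and all other edges blue: there is a red triangle on $A$, no red $K_3+K_2$, all of $G[V\sm A]$ is blue, and yet $G[V\sm A]$ contains no triangle. One can check directly that no recolouring of the kind you describe exists for this graph (for any non-monochromatic colouring of $G[A]$, the constraint that each blue neighbourhood of a vertex of $A$ be independent in the outer $C_5$ forces a monochromatic triangle through $A$). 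The only way to rescue this step is to use the assumption that $c$ also has no \emph{blue} $K_3+K_2$ --- which you have not invoked at this point and which is exactly where the real work of the theorem lies; in the example above the colouring does contain a blue $K_3+K_2$, but one whose triangle meets $A$, a configuration your subsequent argument never looks for.

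Two further issues. First, your second recolouring is incoherent as stated: recolouring $G[A]$ entirely blue and $G[B]$ entirely red produces two monochromatic triangles immediately, so it cannot yield a $K_3$-free colouring; presumably you mean to colour each of $G[A]$ and $G[B]$ non-monochromatically, but then the claim that a single missing cross edge $a_1b_1$ suffices to kill all monochromatic triangles (including those through the independent set $V\sm(A\cup B)$) is again asserted rather than proved, and it is of the same delicate nature as the step that already failed. Second, for context: the paper does not prove this theorem either; it is quoted from Bodkin--Szab\'o via \cite{bl2018}, so there is no in-paper proof to compare against. As it stands, your proposal has a genuine gap and does not establish the statement.
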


\begin{proof}[Proof of \thref{thm:3equiv}]
For $k=3$ and $t=2$, \thref{GeneralLowerBdd} implies that $K_3$ and $K_3+K_2$ are $q$-equivalent if $R_q(2,3,\ldots,3) = R_{q-1}(3) > 3q$. This inequality follows for $q\ge 4$ easily by induction on $q$ with the induction start given by the fact $R_3(3)=17>4\cdot 3$.

It remains to prove that $K_3$ and $K_3+K_2$ are $3$-equivalent. 
Clearly, any graph which is $3$-Ramsey for $K_3+K_2$ is also $3$-Ramsey for $K_3$. 
Let now $G$ be a graph that is $3$-Ramsey for $K_3$ and let $c$ be a 3-colouring of $G$ using colours red, blue, and yellow. Let $R$, $B$, and $Y$ denote the subgraphs formed by the red, blue, and yellow edges, respectively. We need to show that we can find a copy of $K_3+K_2$ in one of $R$, $B$, or $Y$. 

Suppose first that none of the subgraphs of $G$ formed by the union of any two of $R, B, Y$ is a $2$-Ramsey graph for $K_3$. Then the subgraph $R\cup B$ can be recoloured red-blue without monochromatic copies of $K_3$. 
Hence there must exist a (yellow) copy of $K_3$ in $Y$, since $G\to (K_3)_3$. 
Similarly we argue that there is also both a blue and a red copy of $K_3$ in $G$. 
We are then done by \thref{obsii}. 

Suppose now that without loss of generality $R\cup B$ is $2$-Ramsey for $K_3$.  
Then by \thref{K6Distinguisher} either there is a copy of $K_3+K_2$ in $R$ or in $B$ (and we are done); or $K_6$ is a subgraph of $R\cup B$, say on vertex set $S$. Now we find either a red or a blue copy of $K_3+K_2$ in $S$; or both a red and a blue copy of $K_3$ on $S$. 

We claim that $G$ contains a further (not necessarily monochromatic) copy of $K_3$ in $V(G)\sm S$. Suppose not. Then we recolour $G$ as follows. 
 Let $v\in S$ and colour the edges of $G[V(S)\setminus \{v\}]$ with red and blue without a monochromatic copy of $K_3$ (i.e.~a red and a blue $C_5$). Colour all edges incident to $v$ in $S$  yellow and colour all edges in $V(G)\sm S$ blue. Finally, colour all edges between $V(S)\setminus \{v\}$ and $V(G)\sm S$ yellow and all those between $v$ and $V(G)\sm S$ red. Unless there is a triangle in $V(G)\sm S$ this colouring does not contain a monochromatic copy of $K_3$, a contradiction to $G\to (K_3)_3$. 
Let $T$ be this triangle in $G-S$. If any of the edges of $T$ is red or blue, then this edge forms a monochromatic copy of $K_3+K_2$ with one of the monochromatic triangles in $S$. Otherwise, all edges of $T$ are yellow, and we are done again by \thref{obsii}.
\end{proof}

We now turn to the proof of \thref{thm:nonequiv}. To show the non-equivalence of two graphs $H$ and $H'$ we need to construct a graph that is $q$-Ramsey for one of the graphs, say for $H$, and not $q$-Ramsey for $H'$. Recall that the signal senders in Section~\ref{sec:preliminaries} provide us with graphs that can enforce certain predefined colour patterns. We now introduce suitable colour patterns. 
Following notation of~\cite{fglps2016}, we call a graph $F$ on $n$ vertices 
$(n,r,k)$-critical if $K_{k+1}\not\se F$ and every subset $S\se V(F)$ of size $|S|\ge n/r$ satisfies $K_k\se F[S]$. 
A sequence of pairwise edge-disjoint graphs $F_1,\ldots, F_r$ on the same vertex set $V$ is called a {\em colour pattern} on $V$.

\begin{lemma}[Lemmas 4.2 and 4.4 in \cite{fglps2016}]\thlab{nrkcritical}
Let $k\ge 2$, $r\ge 3$ be integers. Then there exists a colour pattern $F_1,\ldots,F_r$ on vertex set $[n]$, for some $n$, such that each $F_i$ is $(n, r, k)$-critical.
\end{lemma}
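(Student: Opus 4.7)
The plan is to build the colour pattern in two stages: first establish the existence of a single $(n,r,k)$-critical graph, then lift this to $r$ pairwise edge-disjoint such graphs on a shared vertex set.

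For the single-graph stage, I would use an Erd\H{o}s--Rogers style probabilistic construction. Sample the random graph $G(n,p)$ at a density $p = p(r,k)$ chosen to balance two constraints: the expected number of copies of $K_{k+1}$ should be small enough to be eliminated by the alteration method (deleting one edge per $K_{k+1}$, so the resulting graph is $K_{k+1}$-free), while simultaneously, via Chernoff and union bounds, every subset of at least $n/r$ vertices should contain a copy of $K_k$ with positive probability. This yields an $(n,r,k)$-critical graph for all $n$ sufficiently large in terms of $r$ and $k$.

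For the edge-disjoint stage, I would work on a larger vertex set $[N]$ and extract the colour classes iteratively. At step $i$, sample a fresh $(N,r,k)$-critical graph $F_i'$ on $[N]$ via the construction above, delete all edges already used in $F_1,\ldots,F_{i-1}$, and let $F_i$ be the result. $K_{k+1}$-freeness is inherited trivially, and to maintain criticality one replaces the single-copy guarantee from stage one with a supersaturation estimate: $F_i'$ should contain super-polynomially many copies of $K_k$ in every subset of size at least $N/r$, which comes essentially for free by strengthening the parameters in the first-stage construction. A union bound over the at most $\binom{N}{2}$ deleted edges then shows that at least one copy of $K_k$ survives in every $N/r$-subset of $F_i$.

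The hard part will be exactly this supersaturation step, which must be strong enough to absorb the cumulative deletions across all $r$ stages while still guaranteeing a surviving $K_k$ in each of the exponentially many $N/r$-subsets. Once one fixes the right density $p$ so that $G(N,p)$ simultaneously contains super-polynomially many $K_k$'s in every linear subset and at most a manageable number of $K_{k+1}$'s, the iterative scheme succeeds for $N$ large enough in terms of $r$ and $k$, yielding the desired colour pattern $F_1,\ldots,F_r$.
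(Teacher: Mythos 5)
The paper does not actually prove this lemma: it quotes it from \cite{fglps2016} and merely remarks that, once one drops the quantitative dependence of $n$ on $r$, ``a now standard application of the probabilistic method'' suffices. Your plan is that standard argument in outline, and stage one is essentially sound, with two caveats: the lower tail for the number of $K_k$'s inside each of the $\binom{n}{n/r}$ subsets is not a Chernoff bound (clique indicators are far from independent) but an application of Janson's inequality, and to beat the union bound over $2^n$ subsets you need the expected count per subset to be $\Omega(n)$, which pins the density at $p=\Theta(n^{-2/k})$ up to constants depending on $r$ and $k$. At that density the alteration works because there are only $O(\log n)$ copies of $K_{k+1}$ and each edge lies in polylogarithmically many $K_k$'s, so the alteration destroys $o(n)$ copies per subset.

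The genuine gap is in stage two, and your proposed fix does not close it. First, ``super-polynomially many'' copies of $K_k$ is impossible (there are at most $n^k$), and the supersaturation you need is not ``essentially for free'': pushing $p$ above $n^{-2/k}$ to inflate the $K_k$-count per subset also inflates the number of copies of $K_{k+1}$, and for the alteration to survive one needs roughly $p\le n^{-2/(k+2)}$, whereas the literal bound ``(number of previously used edges) times (max number of $K_k$'s per edge)'' only drops below the per-subset supply at densities around $n^{-2(k-2)/(k(k-1))}$ or higher; for every $k\ge 3$ these requirements are incompatible (even where the exponents overlap, a factor polynomial in $r$ or in $n$ is lost). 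In particular, at the workable density $p=\Theta(n^{-2/k})$ the previously used edges number $\Theta(rn^{2-2/k})\gg n$, so a worst-case union bound over deleted edges swamps the $\Theta(n)$ copies available in a subset. The idea that actually rescues the iteration is independence: $F_i'$ is sampled independently of $F_1,\dots,F_{i-1}$, so a previously used pair $uv$ is an edge of $F_i'$ only with probability $p$ and the expected number of $K_k$'s of $F_i'$ through it is $\binom{n-2}{k-2}p^{\binom{k}{2}}=O(1/n)$; hence the expected number of copies of $K_k$ in $F_i'$ meeting the forbidden edge set is $O(rn^{2-2/k}/n)=o(n)$, and Markov applied to this single global count (no union bound over subsets is needed, since the global count dominates the count in each subset) shows every $n/r$-subset keeps a copy after the deletions. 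Alternatively, you can avoid the iteration entirely: sample one $G(n,p)$ with $p=\Theta(rn^{-2/k})$ and colour its edges uniformly at random with $r$ colours; the colour classes are edge-disjoint by construction and each is marginally distributed as $G(n,p/r)$, so the stage-one analysis applies to each class directly.
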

\begin{remark}
The results in \cite{fglps2016} include bounds on $n$ in terms of $r$, which is unnecessary for our purpose. Without these bounds, the lemma can actually be proved by a now standard application of the probabilistic method. 
\end{remark}

Next we state a lemma which captures the effect of repeated application of the pigeonhole principle in a coloured bipartite graph. Its proof is a straight-forward generalisation of the proof of Lemma 2.6~(a) in~\cite{fglps2014}. 
\begin{lemma}\thlab{focusing}
Let $G=(A\cup B, E)$ be a complete bipartite graph with a $q$-colouring $c:E\to[q]$ of its edges. Then there exists a subset $B'\se B$ with $|B'|\ge |B|/q^{|A|}$ such that for every vertex $a\in A$ the set of edges from $a$ to $B'$ is monochromatic. 
\end{lemma}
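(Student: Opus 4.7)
The plan is to prove \thref{focusing} by iterating the pigeonhole principle once for each vertex of $A$, successively refining the set $B'$. The statement allows different vertices $a \in A$ to be associated with different colours; we only need that, for each fixed $a$, all edges from $a$ to $B'$ share a common colour. This flexibility is exactly what makes a vertex-by-vertex pigeonhole argument work.

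Concretely, I would enumerate $A = \{a_1,\ldots,a_m\}$ where $m = |A|$, and construct a nested sequence $B = B_0 \supseteq B_1 \supseteq \cdots \supseteq B_m$ as follows. Given $B_{i-1}$, look at the $q$-colouring $c$ restricted to the edges between $a_i$ and $B_{i-1}$. These $|B_{i-1}|$ edges are partitioned into at most $q$ colour classes, so by pigeonhole there is a colour $c_i \in [q]$ such that the set $B_i := \{\, b \in B_{i-1} : c(a_i b) = c_i \,\}$ has size at least $|B_{i-1}|/q$. By construction, for every $b \in B_i$ and every $j \le i$, the edge $a_j b$ has colour $c_j$.

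Setting $B' := B_m$ yields the desired subset: every $a_i \in A$ sends only edges of colour $c_i$ into $B'$, and
\[
|B'| = |B_m| \ge \frac{|B_{m-1}|}{q} \ge \cdots \ge \frac{|B|}{q^m} = \frac{|B|}{q^{|A|}}.
\]
This completes the proof. There is no real obstacle here; the only thing to check is that the bound is inherited correctly through the iteration, which is immediate from the geometric decay $|B_i| \ge |B_{i-1}|/q$ applied $|A|$ times.
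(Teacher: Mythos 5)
Your proof is correct, and it is exactly the standard iterated-pigeonhole argument that the paper has in mind (the paper omits the proof, citing it as a straightforward generalisation of Lemma 2.6(a) in the referenced work, which proceeds the same way). Nothing to add.
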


We are ready to prove \thref{thm:nonequiv}. 

\begin{proof}[Proof of \thref{thm:nonequiv}] 
Fix $k\geq 3$. 
The proof proceeds by induction on $q$. 
For $q=2$, there exists a graph $G_2$ that satisfies $G_2\to K_k$ and $G_2\nto K_k\cdot K_2$, by~\cite{fglps2014}. 
So assume that $q\ge 2$ and let $G_q$ be a graph such that is $G_q\to (K_k)_q$ and $G_q \nto (K_k\cdot K_2)_q$. We construct a graph $G_{q+1}$ with the properties $G_{q+1} \to (K_k)_{q+1}$ and $G_{q+1} \nto (K_k\cdot K_2)_{q+1}$. 

Let $r=q^{|V(G_q)|+qk^2}+1$, and let 
$F=F_1\cup\ldots\cup F_q$ be a colour pattern such that each $F_i$ is $(n,r,k-1)$-critical for some $n$. The existence of $F$ follows from \thref{nrkcritical}. (Note that we only use $q$ pairwise edge-disjoint graphs, where the lemma in fact provides $r$ such graphs $F_i$.) 
We construct $G_{q+1}$ as follows. 
Let $\tilde G_q$ be a copy of $G_q$, say on vertex set $V_0$. 
Let $V_1,\ldots, V_{k-2}$ be pairwise vertex disjoint sets of size $n=|V(F)|$ that are disjoint from $V_0$. Let $\{e_1,\ldots,e_q\}$ be a matching of size $q$, (vertex-) disjoint from $V_0\cup\ldots\cup V_{k-2}$. 
For each $1\le j \le k-2$ let $F^{(j)}=F_1^{(j)}\cup\ldots\cup F_q^{(j)}$ be a copy of $F$ on vertex set $V_j$. 
Additionally, add all edges between $V_i$ and $V_j$ for all $0\le i<j\le k-2$. 
Finally, we join edges by signal senders in the following way. 
For all $1\le i < j\le q$, join $e_i$ and $e_j$ by a negative signal sender 
$S^{-}=S^{-}(q+1,K_k,k)$. 
And for all $1\le i \le q$ and every edge $e\in F_i^{(1)}\cup\ldots\cup F_i^{(k-2)}$ join $e$ and $e_i$ by a positive signal sender $S^{+}=S^{+}(q+1,K_k,k)$. 
Both signal senders $S^{-}$ and $S^{+}$ exist by \thref{lem:signal}. 
The resulting graph is $G_{q+1}$, an illustration can be found in Figure~\ref{fig:nonEquiv}. 
\begin{figure}
	\begin{center}
	\includegraphics[width=0.7\textwidth]{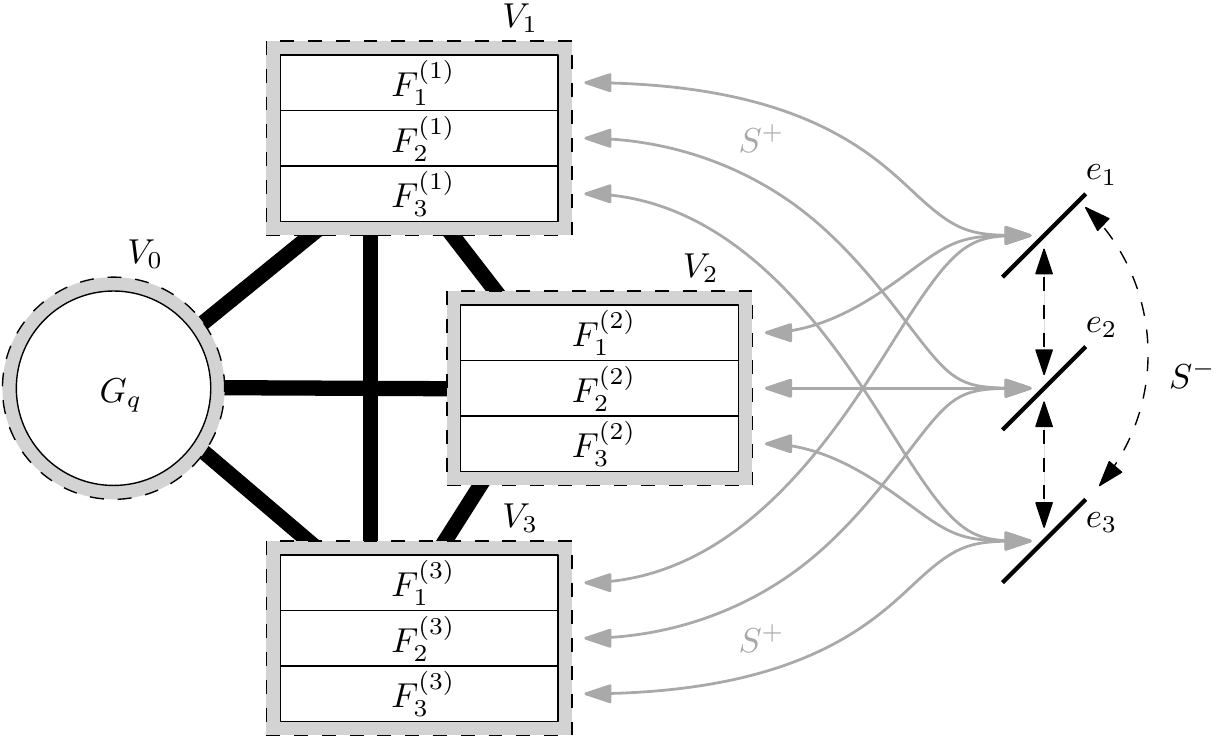}
	\end{center}
	\caption{An illustration of the graph $G_{q+1}$ when $q=3$ and $k=5$. Bold edges indicate complete bipartite graphs.}
	\label{fig:nonEquiv}
\end{figure}
\begin{claim}
$G_{q+1}\nto (K_k\cdot K_2)_{q+1}$. 
\end{claim}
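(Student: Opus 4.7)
The plan is to exhibit a $(q+1)$-colouring $c$ of $E(G_{q+1})$ with no monochromatic copy of $K_k\cdot K_2$. By the inductive hypothesis, fix a $q$-colouring $c_0$ of $\tilde G_q$ containing no monochromatic $K_k\cdot K_2$. Extend $c_0$ as follows: set $c(e_i)=i$ for every $i\in[q]$; colour every edge of $F_i^{(j)}$ with colour $i$ for all $i\in[q]$ and $j\in[k-2]$; colour every bipartite edge between distinct parts $V_a,V_b$ (with $0\le a<b\le k-2$) with colour $q+1$; and extend $c$ to each signal sender by a $K_k$-free $(q+1)$-colouring consistent with the already assigned signal-edge colours. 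The latter is possible by (S1) and (S2), since negative signal senders join pairs $e_i,e_j$ with $i\neq j$ (hence have distinct colours on their signal edges), and positive signal senders join $e_i$ to edges of some $F_i^{(j)}$ (hence share the common colour $i$).

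The crux of the argument is the following claim: every monochromatic $K_k$ under $c$ lies entirely inside $\tilde G_q$. The standard signal-sender reasoning (using that $K_k$ is $3$-connected or isomorphic to $K_3$, combined with the distance bound $d=k$) shows that any monochromatic $K_k$ is either fully contained in a single signal sender or uses no interior vertex of any sender. The first case is excluded because each sender carries a $K_k$-free colouring. In the second case $K_k$ is supported on $V_0\cup V_1\cup\cdots\cup V_{k-2}$. If its colour is some $i\in[q]$ then, since cross-part edges have colour $q+1\neq i$, it lies inside a single part $V_a$; the case $a\ge 1$ is ruled out because $F_i^{(a)}$ is $(n,r,k-1)$-critical and hence contains no $K_k$, so $V(K_k)\subseteq V_0$. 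If instead its colour is $q+1$, then it lies in the complete $(k-1)$-partite graph on the parts $V_0,\ldots,V_{k-2}$, which admits at most $k-1$ pairwise adjacent vertices, a contradiction.

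To conclude, suppose for contradiction that some colour class contains a copy of $K_k\cdot K_2$, say with the $K_k$ on vertex set $K$ together with a pendant edge $vw$, $v\in K$. By the claim the common colour is some $i\le q$ and $K\subseteq V_0$. If $w\in V_0$, then the entire copy of $K_k\cdot K_2$ lies in $\tilde G_q$, contradicting the choice of $c_0$. If $w\in V_j$ for some $j\ge 1$, then $c(vw)=q+1\neq i$, a contradiction. If $w$ is an interior vertex of some signal sender, then $v$ would have to be an endpoint of one of that sender's signal edges; but every signal edge is either an $e_i$ or an edge of some $F_i^{(j)}$ with $j\ge 1$, and none of these is incident to $V_0$, giving the final contradiction. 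The main obstacle is the case analysis that confines every monochromatic $K_k$ to $\tilde G_q$; once this is in place, the $(n,r,k-1)$-criticality of each $F_i^{(j)}$ and the fact that there are only $k-1$ parts $V_a$ do the rest.
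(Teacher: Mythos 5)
Your proof is correct and follows essentially the same route as the paper: the same explicit colouring (induced colouring on $V_0$, colour $i$ on $F_i^{(1)}\cup\dots\cup F_i^{(k-2)}$ and $e_i$, colour $q+1$ on cross-part edges, $K_k$-free extensions to the signal senders), followed by the same confinement argument showing every monochromatic $K_k$ lies in $V_0$ and can have no pendant edge of its own colour. Your final case analysis for the pendant edge is slightly more explicit than the paper's, but the underlying argument is identical.
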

\begin{proof}
Consider the following $(q+1)$-colouring of the edges of $G_{q+1}$. 
By inductive hypothesis of $G_q$, there exists a $(K_k\cdot K_2)$-free 
colouring $c_0:E(\tilde G)\to [q]$ of the edges in $V_0$.
For all $1\le i \le q$, colour the edges of $F_i^{(1)}\cup\ldots\cup F_i^{(k-2)}$ and the edge $e_i$ in colour $i$. 
Colour all edges between any $V_i$ and $V_j$, $0\le i<j\le k-2$, with colour $q+1$. 
Note that all pairs of edges that are joined by copies of $S^+$ have the same colour. 
There exists a $K_k$-free $(q+1)$-colouring $c^+$ of $S^+$ by Property~$(S1)$, 
and by Property~$(S2)$ both signal edges have the same colour in $c^+$. Extend the partial colouring of $G_{q+1}$ to every copy of $S^+$ using $c^+$ (possibly permuting the colours so that the colouring agrees on the already coloured signal edges). Similarly, any two edges that are joined by copies of $S^-$ received distinct colours (edge $e_i$ received colour $i$ for $i\in [q]$); and there exists a $K_k$-free $(q+1)$-colouring $c^-$ of $S^-$ by Property~$(S1)$ in which the two signal edges have distinct colours. Extend the partial colouring further to every copy of $S^-$ using $c^-$, again permuting colours when needed. 
 
We claim that this gives a $(K_k\cdot K_2)$-free $(q+1)$-colouring of $G_{q+1}$. 
First note that any copy of $K_k$ is either contained in $V_0\cup\ldots\cup V_{k-2}$, or is contained in one of the copies of a signal sender. This follows since the intersection of the vertex set of every copy of $S^+$ (or $S^-$) and $V_0\cup\ldots\cup V_{k-2}\cup \bigcup_{k\in [q]} e_k$ contains at most the two signal edges of the signal sender, and since the distance between those two edges is at least $k$ in $G_{q+1}$. 
The colouring is $K_k$-free on every copy of a signal sender by the choice of the colourings $c^+$ and $c^-$. Next, note that the edges of colour $(q+1)$ in $V_0\cup\ldots\cup V_{k-2}$ form a (complete) $(k-1)$-partite graph as no edge inside $V_i$, $0\le i\le k-2$, has colour $q+1$. Thus there is no monochromatic copy of $K_k$ in colour $q+1$ in $G_{q+1}.$  
Furthermore, for every $1\le i\le q$, the graph formed by edges of colour $i$ on vertex set $V_1\cup\ldots\cup V_{k-2}$ is isomorphic to the vertex-disjoint union of copies of $F_i$ which is $(n,r,k-1)$-critical and thus $K_k$-free. 
It follows that the only monochromatic copies of $K_k$ are contained in $V_0$. The colouring on $V_0$ only uses the colours $[q]$, whereas all edges between $V_0$ and $V(G_{q+1})\sm V_0$ have colour $q+1$. Furthermore, the colouring on $V_0$ is $K_k\cdot K_2$-free, by inductive assumption. Therefore, if there is a monochromatic copy of $K_k$, then it must be contained in $V_0$, and then there is no pendant edge to that copy of the same colour. 
\end{proof}
\begin{claim}
$G_{q+1}\to (K_k)_{q+1}$. 
\end{claim}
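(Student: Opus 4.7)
The plan is to assume for contradiction that $c:E(G_{q+1})\to[q+1]$ is a $K_k$-free colouring and derive a contradiction. We may assume that the restriction of $c$ to every signal sender is $K_k$-free, as otherwise a monochromatic $K_k$ is already visible. Property~$(S2)$ of the negative signal senders joining the $e_i$'s then forces $c(e_1),\ldots,c(e_q)$ to be pairwise distinct, and after permuting colours we may set $c(e_i)=i$ for each $i\in[q]$. Property~$(S2)$ of the positive signal senders subsequently forces $c(e)=i$ for every $e\in F_i^{(j)}$. In particular, no edge inside any $V_j$ with $j\in[k-2]$ has colour $q+1$.

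Next, I would iteratively select vertices $v_1\in V_1,\ldots,v_{k-2}\in V_{k-2}$ with the following property: writing $S_j:=V_0\cup\{v_1,\ldots,v_{j-1}\}$, every edge from $S_j$ to $v_j$ has colour $q+1$. Fix $j\in[k-2]$ and $v\in S_j$. If $v$ has at least $n/r$ neighbours in $V_j$ joined in some single colour $i\in[q]$, then the $(n,r,k-1)$-criticality of $F_i^{(j)}$ yields a copy of $K_{k-1}$ in this neighbourhood whose edges all lie in $F_i^{(j)}$ and hence have colour $i$; together with $v$ this is a monochromatic $K_k$ in colour $i$, a contradiction. Hence for each $i\in[q]$, $v$ has fewer than $n/r$ neighbours in $V_j$ joined by colour $i$, so more than $n(1-q/r)$ of its $V_j$-neighbours are joined by colour $q+1$. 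Intersecting the resulting colour-$(q+1)$ neighbourhoods over $v\in S_j$, the common set $V_j^*\se V_j$ has size greater than $n(1-q|S_j|/r)$, which is positive since $|S_j|\le |V(G_q)|+k-3$ and $r=q^{|V(G_q)|+qk^2}+1$ comfortably exceeds $q(|V(G_q)|+k)$. Pick any $v_j\in V_j^*$.

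After $k-2$ such iterations, $v_1,\ldots,v_{k-2}$ induce a monochromatic $K_{k-2}$ in colour $q+1$, and every vertex of $V_0$ is joined to each $v_j$ by a colour-$(q+1)$ edge. If any edge $uw$ inside the copy of $G_q$ spanned by $V_0$ has colour $q+1$, then $\{u,w,v_1,\ldots,v_{k-2}\}$ spans a monochromatic $K_k$ in colour $q+1$. Otherwise the restriction of $c$ to $V_0$ uses only colours from $[q]$, and the inductive hypothesis $G_q\to(K_k)_q$ supplies a monochromatic $K_k$ inside $V_0$. Either outcome contradicts the choice of $c$, so $G_{q+1}\to(K_k)_{q+1}$ as required.

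The key point I expect to require care is the dichotomy in the iterative step: either the critical structure inside some $V_j$ already creates a monochromatic $K_k$ in some colour of $[q]$, or colour $q+1$ must dominate the edges from $S_j$ into $V_j$ so strongly that a non-empty common colour-$(q+1)$ neighbourhood survives throughout the entire process. Once that is in place, the closing reduction to the inductive hypothesis on $V_0$ is immediate, because the absence of colour $q+1$ inside $V_0$ is precisely what allows $G_q\to(K_k)_q$ to take over.
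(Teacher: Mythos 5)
Your proof is correct, and while it follows the same overall skeleton as the paper's (signal senders force $c(e_i)=i$ and force each $F_i^{(j)}$ to be monochromatic of colour $i$; one then builds a colour-$(q+1)$ structure across $V_1,\ldots,V_{k-2}$ completely joined to $V_0$ in colour $q+1$, and finishes by the inductive hypothesis on $V_0$), the key intermediate step is genuinely different. The paper first applies its focusing lemma (\thref{focusing}) to the complete bipartite graph between $V_0\cup W_1\cup\ldots\cup W_{\ell}$ and $V_{\ell+1}$ to extract a set $V_{\ell+1}'$ of size at least $n/r$ on which every vertex of the left side is monochromatic, then uses $(n,r,k-1)$-criticality to find inside $V_{\ell+1}'$ a monochromatic $K_{k-1}$ in each colour $i\in[q]$ (the sets $W_{\ell+1}^{(i)}$), and only then runs the dichotomy ``either some left vertex sees $W_{\ell+1}$ in a colour $i\in[q]$, giving a $K_k$, or all such edges have colour $q+1$.'' You instead run the dichotomy vertex by vertex: for each $v$ in the current left set, either its colour-$i$ neighbourhood in $V_j$ has size at least $n/r$ for some $i\in[q]$ (and criticality of $F_i^{(j)}$ immediately yields a monochromatic $K_k$ through $v$), or colour $q+1$ carries more than $n(1-q/r)$ of its edges into $V_j$; a union bound over the left set then leaves a nonempty common colour-$(q+1)$ neighbourhood from which a single vertex $v_j$ is chosen. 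This avoids the focusing lemma entirely, needs only single vertices $v_j$ rather than the sets $W_j$ of $q$ monochromatic cliques, and loses only a linear (rather than exponential) factor in the left-set size, so the paper's choice of $r$ covers your argument with a large margin. As a minor side benefit, your route also sidesteps the fact that \thref{focusing} is stated for $q$ colours but is applied in the paper to a $(q+1)$-colouring (where the denominator should be $(q+1)^{|A|}$), a bookkeeping point that would otherwise require a slightly larger $r$.
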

\begin{proof}
Let $c:E(G_{q+1})\to[q+1]$ be a $(q+1)$-colouring and suppose that there is no monochromatic copy of $K_k$ in this colouring. 
Then $c$ is $K_k$-free on every copy of $S^-$. Thus the two edges $e_i$ and $e_j$ receive different colours for all $1\le i<j\le q$, by Property~$(S2)$ of a negative signal sender. After permuting colours we may henceforth assume that the edge $e_i$ has colour~$i$ for $1\le i\le q$. 
Furthermore, $c$ is $K_k$-free on every copy of $S^+$ which joins $e$ and $e_i$, for each~$i\in[q]$ and $e\in F_i^{(1)}\cup\ldots\cup F_i^{(k-2)}$. This implies that the graph 
\begin{align}\lab{aux665}
& F_i^{(1)}\cup\ldots\cup F_i^{(k-2)} \text{ is monochromatic of colour $i$ for every }i\in [q],
\end{align}
by Property~$(S2)$ for positive signal senders. 

We now apply \thref{focusing} to the bipartite graph between $V_0$ and $V_1$ and deduce that there is a set $V_1'\se V_1$ with $|V_1'|\ge |V_1|/q^{|V_0|}$ such that 
for every vertex $v\in V_0$ the set of edges from $v$ to $V_1'$ is monochromatic. 
Now, $|V_1|/q^{|V_0|}\ge |V_1|/r$ by choice of $r$. Hence, for every~$i\in[q]$ there is a monochromatic copy of $K_{k-1}$ in colour $i$ in $V_1'$, say on vertex set $W_1^{(i)}$, since $F_1^{(i)}$ is $(n,r,k-1)$-critical and monochromatic of colour $i$, by~\eqref{aux665}. 
Let $W_1= \bigcup_{i\in [q]} W_1^{(i)}$ and note that $|W_1|\le qk$. 
If there exists a vertex $v\in V_0$ such that all the edges from $v$ to $W_1\se V_1'$ have colour $i$ for some $i\in[q]$ then the vertices $W_1^{(i)}\cup\{v\}$ form a monochromatic copy of $K_k$ in colour $i$ and we are done. We may thus assume that all edges between $V_0$ and $W_1$ have colour $q+1$. 

Iteratively assume that we have defined $W_1,\ldots, W_{\ell}$ for some $\ell=1,\ldots,k-3$, such that for every $i,j\in [\ell]$ with $i\ne j$ we have that $W_i\se V_i$ of size $|W_i|\le qk$, $W_i$ contains a monochromatic copy of $K_k$ in every colour $j\in[q]$, all edges between $V_0$ and $\bigcup_{i\in[\ell]} W_i$ have colour $q+1$, and all edges between $W_i$ and $W_j$ have colour $q+1$. 
We then obtain $W_{\ell+1}$ in $V_{\ell+1}$ by repeating the argument above where $V_0$ is replaced by $V_0\cup W_1\cup \ldots \cup W_{\ell}$. Note that this set has size at most $|V_0|+qk^2$. Thus the subset $V_{\ell+1}'\se V_{\ell+1}$ that we obtain by application of \thref{focusing} has size at least $|V_{\ell+1}|/q^{|V_0|+qk^2}\ge |V_{\ell+1}|/r$ by choice of $r$. The rest of the argument is analogous. 

Thus either we find a monochromatic copy of $K_k$ in one of the colours $1,\ldots,q$; or we obtain sets $W_1,\ldots, W_{k-2}$ that form a complete $(k-2)$-partite graph in colour $q+1$ and such that all edges between $V_0$ and 
$\bigcup_{i\in[k-2]} W_i$ are present and have colour $q+1$. If any of the edges in $V_0$ has colour $q+1$, then this edge together with one vertex from each $W_i$, $i\in [k-2]$, forms a monochromatic copy of $K_k$ in colour $q+1$, and we are done again. Otherwise, no edge in $V_0$ has colour $q+1$. But the graph on $V_0$ is isomorphic to $G_q$ which means that in any $q$-colouring of the edges in $V_0$ there is a monochromatic copy of $K_k$ in at least one of the colours. 
\end{proof}
This finishes the proof of \thref{thm:nonequiv}.
\end{proof}

\section{Concluding remarks}

\noindent
{\bf Minimal minimum degree of minimal Ramsey graphs.\\} 
We have proved that $K_k$ and $K_k\cdot K_2$ are not $q$-equivalent for any $q\ge 3$. The proof proceeds by induction on $q$ with the base case given by the non-equivalence in two colours from~\cite{fglps2014}. The {\em $2$-distinguishing} graph $G_2$ constructed in~\cite{fglps2014} actually has a stronger property, namely that $G\nto (K_k\cdot K_2)_2$ and every $(K_k\cdot K_2)$-free colouring of $G_2$ has a {\em fixed} copy of $K_k$ being monochromatic. 
This stronger property was used there to construct a graph $G'$ that is 2-minimal for $K_k\cdot K_2$ and that contains a vertex of degree $k-1$, i.e.~$s_2(K_k\cdot K_2)\le k-1$. The classical paper by Burr, Erd\H{o}s, and Lov\'asz contains the proof of $s_2(K_k) = (k-1)^2$, i.e.~adding a pendant edge to $K_k$ changes the behaviour of $s_2(\cdot)$ drastically. 
\begin{problem}
Determine $s_q(K_k\cdot K_2)$ for $q\ge 3$. 
Specifically, is it true that $s_q(K_k\cdot K_2) \le s_q(K_k)$, and if so, how small is the ratio $s_q(K_k\cdot K_2)/s_q(K_k)$?
\end{problem}  
It is known that $s_q(K_k) = O(q^2(\ln q)^{8(k-1)^2})$ for $k\ge 4$ where the implicit constant is independent of $q$ \cite{fglps2016}. For fixed $k$, this bound is tight up to a factor that is polylogarithmic in $q$. Furthermore, $s_q(K_3) = \Theta(q^2\log q)$ \cite{gw2017}. 

The construction of $G_2$ in~\cite{fglps2014} does not generalise in a straight-forward manner to more than 2 colours. The $q$-distinguishing graph $G_q$, $q\ge 3$, from the proof of \thref{thm:nonequiv} contains signal senders and thus does not have the stronger property of having a fixed copy of $K_k$ that is monochromatic in every $(K_k\cdot K_2)$-free $q$-colouring of $G_q$ as $G_2$. In particular, our graphs $G_q$ cannot be used (per se) for constructions showing upper bounds on $s_q(K_k\cdot K_2)$.

\medskip
\noindent
{\bf From $2$-(non)-equivalence to multicolour-(non)-equivalence.\\} 
We have seen in the introduction that $2$-equivalence of $H$ and $H'$ implies $q$-equivalence for every even $q$. More generally,  \thref{positiveLC} implies that two graphs are $q$-equivalent for every $q\ge 3$ if they are known to be $2$-equivalent {\em and} $3$-equivalent. We reiterate our question from the introduction here. 
\begin{question}
Is it true that any two 2-equivalent graphs $H$ and $H'$ are also 3-equivalent? 
\end{question}
Or are there two graphs $H$ and $H'$ that are, say, $100$-equivalent but not $101$-equivalent? 
We have also said in the introduction that in general one cannot deduce that $H$ and $H'$ are not $q$-equivalent for $q\ge 3$ from the mere fact that they are not $2$-equivalent. All examples had $H$ or $H'$ being disconnected. Is this a coincidence? 
\begin{question}
Let $H$ and $H'$ be both connected graphs that are $3$-equivalent. Is it true that they are 2-equivalent as well? 
\end{question}
This question may have an affirmative answer for the trivial reason that there are no two connected non-isomorphic graphs $H$ and $H'$ that are $q$-equivalent for any $q\ge 2$. This question was first posed in~\cite{fglps2014} for two colours, and we extend it here to any number of colours. 
\begin{question}
For given $q\ge 2$, are there two non-isomorphic connected graphs $H$ and $H'$ that are $q$-equivalent?
\end{question}
Since $K_k$ is not $q$-equivalent to any other connected graph 
(see the discussion preceding \thref{thm:nonequiv})
and since any two 3-connected graphs are not $q$-equivalent for any $q\ge 2$ by \thref{dennisObs} it is generally believed that the answer to this question is no. 

\medskip
\noindent
{\bf Adding a connected graph to a clique.\\} 
We have seen that $K_k$ is Ramsey equivalent to $K_k +H$ where $H$ is a collection of vertex-disjoint cliques. What other graphs $H$ have that property? Here we concentrate on the 2-colour case to highlight how little is known. Of course, all the following questions have natural analogues in the multicolour setting. 
We know that $K_k$ and $K_k+K_k$ are not Ramsey equivalent (since the clique on $R_2(k)$ vertices is a distinguisher) and that $K_k$ and $K_k+ K_{k-1}$ are Ramsey equivalent. The following three questions are, of course, related, we find each of them interesting. 
\begin{question}
\begin{itemize}

\item
What is the largest value of $t=t(k)$  such that there is a connected graph $H$ on $t$ vertices so that $K_k$ and $K_k+H$ are Ramsey equivalent?  

\item 
What is the largest value of $t=t(k)$  such that $K_k$ and $K_k+S_t$ are Ramsey equivalent, where by $S_t$ we denote the star with $t$ vertices (in alignment with the previous question)?  

\item 
What is the largest value of $t=t(k)$  such that $K_k$ and $K_k+P_t$ are Ramsey equivalent, where by $P_t$ we denote the path with $t$ vertices?  

\end{itemize}
\end{question}

The second question is from \cite{fglps2014}. Note that the equivalence of $K_k$ and $K_k+K_{k-1}$ implies that the answer to these questions is at least $k-1$. Moreover, it is easy to obtain an upper bound of roughly $R(k)$, i.e.~exponential in $k$. To the best of our knowledge nothing better is known.
Specifically, we wonder whether $K_k$ and $K_k+S_k$ are Ramsey-equivalent. If the answer is affirmative then this may shed light on whether $K_k +K_{k-1}\cdot K_2$ and $K_k$ are Ramsey equivalent. Slightly more ambitious is the following. 
\begin{problem}
Are $K_k$ and $K_k+K_k^-$ Ramsey equivalent, where $K_k^-$ denotes the clique on $k$ vertices with one edge deleted?
\end{problem}
An affirmative answer would imply that $R(K_k^-)<R(K_k)$, an inequality conjectured to be true, but only known for $k\le 6$, see e.g.~\cite{BLS}.

\bigskip 
\noindent
{\bf Acknowledgement.} 
This research was started at Monash University,
during a research stay of the first and third author 
who would like to express their gratitude for 
hospitality and for providing a perfect working environment.  
The third author would also like to thank Vojtech R\"odl for a helpful discussion.

\bibliographystyle{pagendsort}
\bibliography{references}

\end{document}